\documentclass{proc-l}
\usepackage{hyperref}
\usepackage{xcolor}
\usepackage{amsmath}
\usepackage[T1]{fontenc}
\usepackage{listings}
\usepackage{amssymb}
\usepackage{enumitem}

\RequirePackage{tikz-cd}

\newcommand{\R}{\mathbf{R}}

\newcommand{\Z}{\mathbf{Z}}

\newcommand{\N}{\mathbf{N}}
\newcommand{\Q}{\mathbf{Q}}

\newcommand{\F}{\mathbf{F}}

\newcommand{\G}{\mathbf{G}}

\newcommand{\base}{\mathbf{b}}

\newcommand{\cone}{\text{C}_{\text{eff}}(X)}

\DeclareMathOperator{\Pic}{Pic}

\DeclareMathOperator{\rank}{rank}

\DeclareMathOperator{\Vol}{Vol}

\DeclareMathOperator{\stack}{stack}

\DeclareMathOperator{\num}{num}

\DeclareMathOperator{\age}{age}

\DeclareMathOperator{\coarse}{coarse}

\DeclareMathOperator{\orb}{orb}

\newcommand{\stackS}{\mathcal{S}}
\newcommand{\stackT}{\mathcal{T}}

\newcommand{\stackP}{\mathcal{P}}
\newcommand{\affine}{\mathbf{A}}

\newcommand{\sheafI}{\mathcal{I}}
\newcommand{\sheafO}{\mathcal{O}}

\newcommand{\w}{\omega}

\newcommand{\twistsector}{\pi_0^*(\sheafI_{\mu} X)}

\newtheorem{theorem}{Theorem}[section]
\newtheorem{cor}[theorem]{Corollary}

\newtheorem{lemma}[theorem]{Lemma}
\newtheorem{prop}[theorem]{Proposition}

\theoremstyle{definition}
\newtheorem{definition}[theorem]{Definition}
\newtheorem{hypothesis}[theorem]{Hypothesis}

\theoremstyle{remark}
\newtheorem{remark}[theorem]{Remark}
\newtheorem{question}[theorem]{Question}

\numberwithin{equation}{section}

\begin{document}

\title{The multi-height distribution implies the Batyrev-Manin principle}


\author{Bongiorno Nicolas}
\address{}
\curraddr{}
\email{nicolas.bongiorno@univ-grenoble-alpes.fr}
\thanks{}

\subjclass[2020]{}

\date{}

\dedicatory{}


\begin{abstract}
We explain how to deduce from the multi-height analysis of rational points on a toric stack (respectively on a toric variety) the asymptotic study of the number of rational points of bounded orbifold anticanonical height (respectively bounded anticanonical height), using a general version of the hyperbola method developed by Marta Pieropan and Damaris Schindler.
\end{abstract}


\maketitle

\section{Introduction}

\subsection{Background and previous results}

For a quasi-Fano variety $V$, i.e. one satisfying \cite[Hypothesis 3.27]{Peyre_beyond_height} and with infinitely many rational points, one may study asymptotically the finite set of rational points of bounded height $H$ associated with the anticanonical line bundle $\w_{V}^{-1}$. In \cite{FrankeManinTschinkel1989}, \cite{Batyrev1990} and \cite{peyre_duke}, Batyrev, Franke, Manin, Tschinkel, and Peyre provided strong evidence supporting conjectures relating the asymptotic behaviour of the number of rational points of bounded height on a dense open subscheme of $V$ to geometric invariants of $V$. Batyrev and Tschinkel proved these conjectures using harmonic analysis methods in \cite{batyrev_toric_varieties} for proper, smooth, split toric varieties, while Salberger proved them via a universal torsor lifting method in \cite{salberger_torsor}.

Recently, Ellenberg, Satriano, and Zureick-Brown, in \cite{ellenberg2022heights},
initiated a program aimed at providing a common framework for Malle’s conjecture
and the Batyrev--Manin principle. Subsequently, Darda and Yasuda were able to
extend the formalism of Arakelov heights on varieties to Deligne--Mumford (DM)
stacks and to formulate precise conjectures analogous to the Batyrev--Manin
principle, provided one considers rational points of bounded height $H$, where
$H$ is the height associated with the orbifold anticanonical line bundle (see
\cite[Definition~4.3, Definition~9.1 and Conjecture~5.6]{darda2024batyrevmanin}).
This new formalism notably allowed a reinterpretation of number-theoretic work on Malle and Bhargava's conjectures on the distribution of Galois extensions with fixed group $G$ (see \cite{Malle2002,Malle2004} and \cite{bhargava_mass_formula}) through the lens of Manin's program for $BG$. Subsequently, in \cite{loughran_santens_malle_conjecture}, Santens and Loughran further reinterpreted these number-theoretic problems through the study of rational points of $BG$ using this new machinery. They defined a Tamagawa measure associated with $BG$ and formulated a conjecture on what should be the leading constant in the asymptotic behaviour of rational points of bounded height on $BG$ (see \cite[Definition 8.7 and Conjecture 9.1]{loughran_santens_malle_conjecture}). Darda and Yasuda also established the asymptotic behaviour of the number of rational points of bounded orbifold anticanonical height on toric stacks in \cite[Theorem 1.1]{darda_yasuda_toric_stacks_batyrev}.

Peyre recently proposed in \cite[Question 4.8]{Peyre_beyond_height} a new approach to studying the asymptotic distribution of rational points of bounded height. Instead of considering a single height, it is natural to consider all possible heights. The author's doctoral work consisted in establishing that the expected asymptotic behaviour is obtained for toric varieties (see \cite[Theorem 2.15]{bongiorno2024multiheightanalysisrationalpoints}) and for toric stacks (see \cite[Theorem 1.1]{bongiorno_toric_stacks}).

The purpose of this article is to show that, once the asymptotic behaviour of rational
points with respect to multi-heights is known, one can recover the asymptotic formula
for the number of rational points of bounded anticanonical height (or bounded orbifold
anticanonical height in the stacky setting).

In particular, although it was already understood that the leading constant obtained in
\cite[Theorem~2.15]{bongiorno2024multiheightanalysisrationalpoints} coincides with the
one appearing in \cite[Theorem~11.49]{salberger_torsor}, the present approach makes it
possible to identify the constant obtained in
\cite[Corollary~4.4.5]{darda_yasuda_toric_stacks_batyrev} with the orbifold Tamagawa
number of a toric stack, as defined in
\cite[Definition~6.51,Theorem 6.52]{bongiorno_toric_stacks}.

This work essentially relies on the more general version of the hyperbola method developed by  Pieropan and Schindler in \cite[Section 4]{pieropan_hyperbola_method}.

\subsection{Terminology and the precise statement}

Let $X$ be a smooth and complete split toric variety, respectively toric stack, over $\Q$. We write $T \subset X$ for its dense open (stacky) torus. For the remainder of the article, we refer the reader to \cite[Section 2.3]{bongiorno2024multiheightanalysisrationalpoints} and \cite[Section 5]{bongiorno_toric_stacks} for further details on toric varieties and toric stacks. To simplify the reading of this article and to use the same notation for both varieties and toric stacks, we adopt the following notations to denote the geometric objects associated with $X$ when it is a toric stack:
\begin{itemize}
    \item we denote by $\Pic(X)$ the orbifold Picard group of $X$ (see \cite[Definition 3.7]{bongiorno_toric_stacks});
    \item we denote by $\cone$ the cone of orbifold effective divisors of $X$ (see \cite[Definition 3.17]{bongiorno_toric_stacks});
    \item we denote by $\w_X^{-1}$ the orbifold anticanonical line bundle of $X$ (see \cite[Definition 3.14]{bongiorno_toric_stacks});
    \item we denote by $\stackT$ the extended universal torsor (see \cite[Definition 5.34, Definition 6.1]{bongiorno_toric_stacks});
    \item we write $\Sigma(1)$ for the union of the set of rays of a fan associated to $X$ and the set of twisted sectors
(this corresponds to the notation $\Sigma_{\mathrm{ext}}(1)$ introduced
in \cite[Notations 5.33]{bongiorno_toric_stacks} and used subsequently);
    \item for $\lambda \in \Sigma(1)$, we denote by $[D_\lambda]$ the corresponding class in the orbifold Picard group (see \cite[Notation 3.9,5.26,5.38]{bongiorno_toric_stacks}).
\end{itemize}If $X$ is a toric variety, $\Pic(X)$ denotes its usual Picard group, $\cone$ its cone of effective divisors, $\w_X^{-1}$ its anticanonical line bundle and $\stackT$ its universal torsor. Moreover, $\Sigma(1)$ denotes the set of rays of a fan associated with $X$,
and for $\lambda \in \Sigma(1)$, $[D_\lambda]$ denotes the class of the
boundary divisor in the Picard group.
With this notation, we set $\rho(X) = \rank_{\Z}( \Pic(X) ) $, or simply $\rho$.

We equip $X$ (respectively $X^{\coarse}$) with its natural system of heights as in \cite[Definition 9.2]{salberger_torsor} and \cite[Proposition 2.1.2]{Batyrev1990}. If $X$ is a toric stack, we endow it moreover with its natural stacky data (see \cite[Definition 3.25 and Section 6]{bongiorno_toric_stacks}). With these conventions, we define the multi-height map $$h : X(\Q) \rightarrow \Pic(X)^{\vee}_{\R}$$ as in \cite[Definition 2.7]{bongiorno2024multiheightanalysisrationalpoints} for toric varieties and as in \cite[Definition 3.26]{bongiorno_toric_stacks} for toric stacks. We also define a local height on the universal torsor at a given place $v \in M_{\Q}$,
$$
h_{\stackT,v} : \stackT(\Q_v) \longrightarrow \Pic(X)^{\vee}_{\R}.
$$
as in \cite[Definition 3.6]{bongiorno2024multiheightanalysisrationalpoints} for toric varieties and as in \cite[Definition 6.20]{bongiorno_toric_stacks} for toric stacks

We define a measure $\nu$ on $\Pic(X)^{\vee}_{\R}$ as follows:

\begin{definition}\label{mesure_picard_group}
    $$ \nu(\mathrm{D}) = \int\limits_{\mathrm{D}} e^{\langle \w_{X,}^{-1},y \rangle} dy $$for any compact subset $\mathrm{D}$ of $\Pic(X)^{\vee}_{\R}$, where the Haar measure $dy$ on $\Pic(X)^{\vee}_{\R}$ is normalized so that the covolume of the dual lattice of the Picard group is one.
\end{definition}

For any domain $\mathrm{D} \subset \Pic(X)^{\vee}_{\R}$, for any subset $W \subset X(\Q)$, we write:
$$ W_{h \in \mathrm{D}} = \{ P \in W \mid h(P) \in \mathrm{D} \} .$$

We have proven in \cite[Theorem 4.16]{bongiorno2024multiheightanalysisrationalpoints} and in \cite[Theorem 7.16]{bongiorno_toric_stacks} the following theorem:

\begin{theorem}\label{theorem_multi_height}
Let $\mathrm{D}_1$ be a finite union of compact polyhedra of $\Pic(X)^{\vee}_{\R}$,
and let $u$ be an element of the interior of the dual of the effective cone
$(\cone^{\vee})^{\circ}$.
For a real number $B > 1$, we set:
$$ \mathrm{D}_B = \mathrm{D}_1 + \log(B) u .$$
 
Then the multi-height asymptotic behaviour is of the form:
$$ \sharp ( X(\Q) )_{h \in \mathrm{D}_B} \underset{B \rightarrow +\infty}{\sim} \nu(\mathrm{D}_1 ) \tau(X) B^{\langle \w_{X}^{-1},u \rangle}$$ where $\tau(X)$ is the Tamagawa number of $X$ (see \cite[Definition 3.22]{bongiorno2024multiheightanalysisrationalpoints} and \cite[Definition 6.51]{bongiorno_toric_stacks}).

\end{theorem}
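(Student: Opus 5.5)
The plan is to follow the universal torsor method, since Theorem~\ref{theorem_multi_height} is exactly the statement established in \cite[Theorem 4.16]{bongiorno2024multiheightanalysisrationalpoints} and \cite[Theorem 7.16]{bongiorno_toric_stacks}. First I would lift rational points to integral points on the (extended) universal torsor. The torsor $\stackT$ is an open subset of $\affine^{\Sigma(1)}$, and it is a torsor under the Néron--Severi torus $T_{NS}$, which is split of rank $\rho$. So $\stackT(\Z)\to X(\Q)$ is surjective with fibres of constant size $\sharp T_{NS}(\Z)=2^{\rho}$; in the stacky case one uses the extended torsor and the corresponding finite automorphism count. Moreover, the multi-height of a point $P$ with integral lift $x$ splits as the sum over places of the local heights,
$$h(P)=\sum_{v\in M_\Q} h_{\stackT,v}(x).$$
Integrality and the coprimality conditions defining $\stackT(\Z)$ make the $p$-adic contributions vanish, so $h(P)=h_{\stackT,\infty}(x)$. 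The count therefore becomes
$$\sharp(X(\Q))_{h\in \mathrm{D}_B}=\frac{1}{2^{\rho}}\,\sharp\{x\in \stackT(\Z)\ :\ h_{\stackT,\infty}(x)\in \mathrm{D}_B\}.$$
Equivalently, one can write it as a count of coprime integer vectors $(x_\lambda)_{\lambda\in\Sigma(1)}$ in a real region determined by the inequalities on $\log|x_\lambda|$.

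Next I would remove the gcd conditions defining $\stackT(\Z)$ inside $\affine^{\Sigma(1)}(\Z)$ by Möbius inversion over the primitive collections, which index the components of the complement of $\stackT$. This expresses the count as
$$\sum_{d}\mu(d)\,\sharp\bigl(\Lambda_d\cap \Omega_B\bigr),$$
where $\Lambda_d$ is a sublattice of $\Z^{\Sigma(1)}$ and
$$\Omega_B=\{y\in \R^{\Sigma(1)}\ :\ h_{\stackT,\infty}(y)\in \mathrm{D}_B\}.$$
Because $\mathrm{D}_1$ is a finite union of compact polyhedra and $u$ lies in $(\cone^\vee)^\circ$, each $\log|y_\lambda|$ on $\Omega_B$ is at least $\langle [D_\lambda],u\rangle\log B - C$ with $\langle [D_\lambda],u\rangle>0$. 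So every coordinate tends to infinity, and $\Omega_B$ is, in logarithmic coordinates, a translate of a fixed region. I would then count lattice points with Davenport's lemma or a coordinatewise summation. Each coordinate is large, which gives
$$\sharp(\Lambda_d\cap\Omega_B)=\Vol(\Omega_B)/\det\Lambda_d+\text{error}.$$

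For the main term, the volume computation comes from fibering $\R^{\Sigma(1)}$ over $\Pic(X)^\vee_\R$ via the logarithmic map. Using the exact sequence $0\to M\to \Z^{\Sigma(1)}\to \Pic(X)\to 0$, the fibre integral produces the archimedean Tamagawa factor. The change of variables on the base produces the density $e^{\langle \w_X^{-1},y\rangle}dy$, because $\w_X^{-1}=\sum_\lambda [D_\lambda]$ and $\prod_\lambda|y_\lambda|=e^{\langle \sum_\lambda [D_\lambda],\,\cdot\,\rangle}$. Hence
$$\Vol(\Omega_B)=\omega_\infty\,\nu(\mathrm{D}_B)=\omega_\infty\,\nu(\mathrm{D}_1)\,B^{\langle \w_X^{-1},u\rangle}.$$
The Möbius sum $\sum_d \mu(d)/\det\Lambda_d$ factors as an Euler product of $p$-adic volumes of $\stackT(\Z_p)$. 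Together with $\omega_\infty$ and the factor $2^{-\rho}$, this should reassemble into $\tau(X)$, including the convergence factors $(1-p^{-1})^{\rho}$ and the residue-type normalisation of the Haar measure $dy$. In the stacky case the twisted sector coordinates add extra local factors, which should match \cite[Definition 6.51]{bongiorno_toric_stacks}.

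The main obstacle is controlling the error terms uniformly in $d$ so that the Möbius sum can be truncated. The lattice-point error for $\Lambda_d$ loses a factor comparable to the smallest coordinate size divided by the index, and this must be summed over all $d$. My first approach would exploit the positivity $\langle [D_\lambda],u\rangle>0$. Every coordinate then has size at least $B^{\delta}$ for some $\delta>0$, so truncating at $d\le B^{\eta}$ for small $\eta$ costs $O(B^{\langle \w_X^{-1},u\rangle-\epsilon})$. The tail would be bounded by a trivial count over $\Omega_B$ with one large divisibility constraint. The only place where the boundary of $\mathrm{D}_1$ enters is the volume of a thin neighbourhood of $\partial\Omega_B$, which is handled by the polyhedral hypothesis on $\mathrm{D}_1$.
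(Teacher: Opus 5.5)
Your route is the one behind this statement. The paper quotes it from \cite[Theorem 4.16]{bongiorno2024multiheightanalysisrationalpoints} and \cite[Theorem 7.16]{bongiorno_toric_stacks}, and the proof of Theorem~\ref{theorem_multi_height_simplicial_cone} recalls the ingredients: lift to $\stackT(\Z)$ so that the finite-place heights vanish, remove the coprimality conditions by M\"obius inversion, count lattice points with Davenport's lemma, and get the main term by fibering over $\Pic(X)^{\vee}_{\R}$.

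However, one step you state is false, and you use it twice: for the lattice-point count and for truncating the M\"obius sum. There is no lower bound $\log|y_\lambda|\geqslant\langle[D_\lambda],u\rangle\log B-C$ on $\Omega_B$. Take $X=\mathbf{P}^1$, $u=1$, $\mathrm{D}_1=[0,1]$. Then $\Omega_B=\{B\leqslant\max(|y_0|,|y_1|)\leqslant eB\}$, which contains $(0,B)$ and the torsor point $(1,\lceil B\rceil)$. Small and even vanishing coordinates genuinely occur; the vanishing ones are points of $X(\Q)\setminus T(\Q)$, which belong to the count. In any case a lower bound is not what Davenport's lemma needs. It needs a bounded region whose coordinate projections have controlled volume.

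The ingredient you are missing is an upper bound. $h_{\stackT,\infty}^{-1}(\mathrm{D}_1)$ is compact by \cite[Lemma 4.3]{bongiorno2024multiheightanalysisrationalpoints} and \cite[Theorem 6.29]{bongiorno_toric_stacks}, as also used in the proof of Proposition~\ref{proposition_majoration_cardinal_boite_compact}. By $T_{\mathrm{NS}}(\R)$-equivariance, $\Omega_B=e^{\log(B)u}\cdot\Omega_1\subset\prod_{\lambda}[-NB^{a_\lambda},NB^{a_\lambda}]$ with $a_\lambda=\langle[D_\lambda],u\rangle>0$. Every projection of $\Omega_B$ (after rescaling by the moduli of $\Lambda_d$) onto a proper coordinate subspace then has volume $\ll B^{\langle\w_X^{-1},u\rangle-\min_\lambda a_\lambda}$, up to the relevant part of the index. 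This is exactly the saving $B^{\min_\lambda\langle[D_\lambda],u\rangle}$ that governs the error term in the paper.

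In the M\"obius tail, replace ``every coordinate has size at least $B^{\delta}$'' by two facts. First, on $\stackT$ the coordinates indexed by a primitive collection are not all zero, so each modulus is $\ll B^{\max_\lambda a_\lambda}$. Second, configurations with some vanishing coordinates lose a factor $B^{a_\lambda}$ for each vanishing coordinate, which absorbs the extra $\log B$ from the sum over moduli.

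With this repair your argument works for toric varieties. For toric stacks you still need the explicit description of the integral points of the extended torsor from \cite[Theorem 6.23]{bongiorno_toric_stacks}, which your sketch only mentions in passing.
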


The purpose of this article is to show how this theorem leads to the asymptotic study of rational points of bounded anticanonical height, in line with the classical Manin program. To accomplish this, we will use the more general version of the hyperbola method developed by Pieropan and Schindler. Let us recall their result \cite[Theorem 1.1]{pieropan_hyperbola_method}:

\begin{theorem}\label{theorem_hyperbola_method}
Let $f : \N^{\rho} \rightarrow \R_+$ be an arithmetic function satisfying the following property:
there exist strictly positive real numbers $C_{f,M} \leqslant C_{f,E}$, $\Delta > 0$, and $\w_i > 0$ for $1 \leqslant i \leqslant \rho$ such that, 
for $B_1, \dots, B_{\rho} \in \R_{\geqslant 1}$, we have
$$
\sum\limits_{1 \leqslant y_i \leqslant B_i,\ 1 \leqslant i \leqslant \rho} f(y) 
= C_{f,M} \prod\limits_{i = 1}^{\rho} B_i^{\w_i} 
+ O \!\left( C_{f,E} \prod\limits_{i = 1}^{\rho} B_i^{\w_i} 
\min\limits_{1 \leqslant i \leqslant {\rho}}(B_i)^{-\Delta} \right),
$$
where the implied constant is independent of $f$.  

Let $\mathcal{K}$ be a finite set and $\{\alpha_{i,k}\}_{1 \leqslant i \leqslant s,\, k \in \mathcal{K}}$ a family of positive real numbers.  
We define
$$
\mathcal{D}(B,\alpha) = 
\left\{ y \in \N^{\rho} \Big\vert \prod\limits_{i = 1}^{\rho} y_i^{\alpha_{i,k}} \leqslant B \ \forall k \in \mathcal{K} \right\}.
$$
We wish to evaluate the sum
$$
S_f(B) = \sum\limits_{y \in \mathcal{D}(B,\alpha)} f(y).
$$
Let $\stackP$ denote the polyhedron consisting of all $t \in \R_+^{\rho}$ satisfying
$$
\sum\limits_{i = 1}^{\rho} \alpha_{i,k} \w_i^{-1} t_i \leqslant 1 
\quad \forall k \in \mathcal{K}.
$$
If $\mathcal{P}$ is bounded, not contained in a hyperplane, and if the face $F$ on which the function 
$\sum\limits_{1 \leqslant i \leqslant \rho} t_i$ attains its maximal value $a$ is not contained in a coordinate hyperplane of $\R^{\rho}$, 
then, writing $k = \dim(F)$, we have
$$
S_f(B) = (s - 1 - k)! \, C_{f,M} \, c_{\mathcal{P}} \, \log(B)^k B^{a}
+ O \!\left( C_{f,E} \, \log(\log(B))^s \, \log(B)^{k-1} B^a \right),
$$
where $c_{\mathcal{P}}$ is an explicit constant.
\end{theorem}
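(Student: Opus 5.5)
The plan is to treat $S_f(B)$ as a Stieltjes integral of the counting function $F(B_1,\dots,B_\rho)=\sum_{y_i\leqslant B_i} f(y)$ over the region $\mathcal{D}(B,\alpha)$. We then replace $dF$ by the main-term measure $C_{f,M}\, d\bigl(\prod_i B_i^{\w_i}\bigr)$ and control the discrepancy with the error term of the hypothesis.

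First, pass to logarithmic coordinates $u_i=\w_i\log y_i$. The main-term measure becomes $C_{f,M}e^{\sum_i u_i}\,du$, and the region becomes $\log(B)\cdot \stackP$. So the main term is $C_{f,M}\int_{\log(B)\stackP} e^{\sum u_i}\,du$. We evaluate this integral by a Laplace-type argument. Write $u=\log(B)\,t$ and decompose near the face $F$, where $\sum t_i$ attains its maximum $a$. The directions parallel to $F$ contribute the $k$-dimensional volume $\log(B)^k\operatorname{vol}(F)$, suitably normalised. The $\rho-k$ transverse directions contribute an integral of $e^{-\ell(v)}$ over a polyhedral cone, where $\ell$ is the linear form measuring the drop of $\sum t_i$ away from $F$. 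This integral yields the combinatorial factor $(s-1-k)!$ together with the remaining part of $c_{\stackP}$. The discarded region, at distance $\gg 1$ from $F$ in the transverse directions, costs only a factor $O(\log(B)^{k-1}B^a)$. Boundedness of $\stackP$ guarantees that the integral converges, and the assumption that $\stackP$ is not contained in a hyperplane guarantees that it is nondegenerate.

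Second, to pass from the integral to the actual sum, we tile the region in $u$-coordinates by boxes of side length $\eta=1/\log\log B$, roughly. Each box $\prod_i(B_i',B_i]$ is counted by inclusion–exclusion over its $2^\rho$ corners from $F$. The hypothesis then gives, for each box, the main-term measure of the box plus an error $O(C_{f,E}\prod B_i^{\w_i}\min_i B_i^{-\Delta})$. Boxes meeting the boundary of $\log(B)\stackP$ are bounded above and below by the main-term mass of a slightly enlarged or shrunken region. This mass differs from the main term by a relative factor $O(\eta)$ at worst, and after refining near the top face by a lower-order amount. This step produces the $\log\log B$ powers in the error.

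The main obstacle is the error from boxes where some $B_i$ is small, since there the saving $\min_i B_i^{-\Delta}$ is useless. This is exactly where the hypothesis that $F$ is not contained in a coordinate hyperplane enters. Because every point of $F$ has all $t_i>0$, the points $u$ with $\sum u_i\geqslant a\log B - O(\log\log B)$ satisfy $u_i\gg\log B$ for every $i$, apart from a set of negligible mass. On these points $\min_i B_i^{-\Delta}\leqslant B^{-c\Delta}$, which easily absorbs the polylogarithmic number of boxes. On the complementary region the trivial bound (main plus error) is at most $C_{f,E}B^{a}\log(B)^{O(1)}B^{-c'}$ for some $c'>0$, because $\sum t_i$ stays strictly below $a$ there. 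I would first try to make this separation quantitative by choosing a neighbourhood of $F$ in $\stackP$ on which $\min_i t_i\geqslant \delta>0$. I would then show that $\sum t_i\leqslant a-\delta'$ outside this neighbourhood, by compactness of $\stackP$. Summing the contributions gives the stated error $O\bigl(C_{f,E}\log(\log B)^s\log(B)^{k-1}B^a\bigr)$.
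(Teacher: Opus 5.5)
Note first that the paper gives no proof of this statement. It is quoted from Pieropan and Schindler (their Theorem~1.1), whose argument refines the Blomer--Br\"udern treatment of hyperbolic regions. Your skeleton is the right one: logarithmic coordinates $u_i=\w_i\log y_i$, a Laplace evaluation of $C_{f,M}\int_{\log(B)\mathcal{P}}e^{\sum_i u_i}\,du$, and inclusion--exclusion over small boxes where all $B_i$ are large.

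There is, however, a genuine gap: the step you identify as the main obstacle rests on a false claim. The hypothesis says only that $F\not\subset\{t_i=0\}$ for each $i$. It does \emph{not} say that $F$ avoids the coordinate hyperplanes. In the very case used in this paper, $\mathcal{P}$ is the standard simplex, and $F=\{t\geqslant 0,\ \sum_i t_i=1\}$ contains the vertices $e_1,\dots,e_\rho$. So there is no neighbourhood of $F$ on which $\min_i t_i\geqslant\delta$. Your compactness argument therefore fails, since its ``complementary region'' contains points of $F$ itself. The hyperbolic spikes (e.g.\ $y_2=\dots=y_\rho=1$) reach the top face. In this example the region $\{u\in\log(B)\mathcal{P}:\ \min_i u_i\leqslant L\}$ has main-term mass $\asymp L\log(B)^{\rho-2}B$, not $O(B^{1-c'})$. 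On this region the error term of the hypothesis gives no saving, so inclusion--exclusion over small boxes is useless there. The trivial bound applied to boxes of side $\eta$ also loses a factor $\eta^{-\rho}$. Calling this set ``of negligible mass'' does not bound $\sum f$ over it.

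What is missing is a split at the threshold $\min_i u_i=L$, with $L=C\log\log B$.

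Where $\min_i u_i\geqslant L$, fine boxes work. The hypothesis saves a factor $(\log B)^{-C\Delta/\max_i\w_i}$ per box, and for $C$ large this absorbs the polylogarithmic number of boxes. You can then afford mesh $\eta\asymp 1/\log B$, and you need it: with $\eta=1/\log\log B$ the boundary layer alone costs about $\eta\log(B)^kB^a$, far above the claimed error.

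Where $\min_i u_i\leqslant L$, use boxes of bounded logarithmic size together with the trivial bound $\sum_{y\leqslant Y}f(y)\ll C_{f,E}\prod_i Y_i^{\w_i}$. This bound is valid because $f\geqslant 0$ and $C_{f,M}\leqslant C_{f,E}$. It then remains to show that the $e^{\sum_i u_i}$-mass of an $O(1)$-neighbourhood of this region is $\ll (L+1)\log(B)^{k-1}B^a$.

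This last estimate is where the hypothesis on $F$ is really used. Since $t_i$ does not vanish identically on $F$, one has $\mathrm{vol}_k\{t\in F:\ t_i\leqslant\varepsilon\}\ll\varepsilon$. Hence the slice $\mathcal{P}\cap\{\sum_j t_j=a-\delta,\ t_i\leqslant\varepsilon\}$ has volume $\ll(\varepsilon+\delta)\,\delta^{\rho-1-k}$. Integrating against $B^{a-\delta}$ with $\varepsilon=L/\log B$ gives the bound. The $\log\log B$ factors in the error term come from this threshold $L$, not from the mesh.
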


Let us consider for a real number $B > 1$, the set $$T(\Q)_{h(P)(\w_X^{-1}) \leqslant \log(B)} = \{ P \in T(\Q) \mid h(P)(\w_X^{-1}) \leqslant \log(B) \} .$$Using their theorem, we recover the known result of \cite[Theorem 11.49]{salberger_torsor} and \cite[Theorem 4.4.4]{darda_yasuda_toric_stacks_batyrev}:

\begin{theorem}\label{theorem_article}
    We have the following asymptotic behaviour: $$\sharp T(\Q)_{h(P)(\w_X^{-1}) \leqslant \log(B)}   \underset{B \rightarrow +\infty}{\sim} \alpha(X) \tau(X) B^{\langle \w_{X}^{-1},u \rangle} \log(B)^{\rho(X) - 1} $$where $\alpha(X)$ is the constant given by \ref{definition_effective_leading_constant}.
\end{theorem}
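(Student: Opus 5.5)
Note first that in the statement the exponent $\langle \w_X^{-1},u\rangle$ must reduce to $1$: counting by anticanonical height $\leqslant B$ gives growth $B\log(B)^{\rho-1}$. I read it with $u$ normalised so that $\langle \w_X^{-1},u\rangle = 1$.

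The plan is to reduce the count of points with $h(P)(\w_X^{-1})\leqslant \log B$ to a sum over a discrete multi-height parameter, and then apply Theorem~\ref{theorem_hyperbola_method}.

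\textbf{Discretisation and choice of $f$.} Choose a $\Z$-basis $L_1,\dots,L_\rho$ of $\Pic(X)$ whose elements lie in $\cone$, taken so that $\w_X^{-1}=\sum_i a_i L_i$ with $a_i>0$. One could instead use the generators $[D_\lambda]$ together with a simplicial decomposition of $\cone$. Let $y^\vee$ be the coordinates on $\Pic(X)^\vee_\R$ dual to this basis. Since heights of points of $T(\Q)$ are bounded below (up to $O(1)$) on effective classes, $h(P)$ lies in a translate of $\cone^\vee$. For $y\in\N^\rho$, define $f(y)$ as the number of $P\in T(\Q)$ with $\lceil e^{h(P)(L_i)}\rceil=y_i$ for every $i$. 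A partial sum $\sum_{y_i\leqslant B_i} f(y)$ then counts points with $h(P)$ in a box $\prod_i\,]-\infty,\log B_i]$, intersected with the region where points exist.

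\textbf{Verifying the hypothesis.} I would obtain the hypothesis of Theorem~\ref{theorem_hyperbola_method} from Theorem~\ref{theorem_multi_height}, with $\w_i=a_i$ for the weights. The box is a compact polyhedron $\mathrm{D}_1$ translated by $\sum_i\log(B_i)e_i^\vee$, but the $B_i$ vary independently, so a single dilation parameter $u$ is not enough. Moreover Theorem~\ref{theorem_hyperbola_method} requires a power-saving error $\min(B_i)^{-\Delta}$, uniform in the $B_i$, whereas Theorem~\ref{theorem_multi_height} only gives an asymptotic. This is the main obstacle.

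To overcome it, I would go back to the proof of Theorem~\ref{theorem_multi_height}, in \cite{bongiorno2024multiheightanalysisrationalpoints} and \cite{bongiorno_toric_stacks}. There the count is done by lifting to the universal torsor $\stackT$, using a Möbius inversion over $\stackT(\Z)$ and a lattice-point count in regions defined by $h_{\stackT,\infty}$. Redoing that count on boxes with independent sides should give a uniform error of order $\prod_i B_i^{a_i}\min_i B_i^{-\Delta}$, since lattice-point errors are controlled by the smallest side.

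Failing that, I would sandwich. Cover the region $\{\langle \w_X^{-1},y\rangle\leqslant\log B\}$ by $O(1)$-size cubes translated along $\cone^\vee$ and apply Theorem~\ref{theorem_multi_height} to each cube. For cubes far from the boundary of $\cone^\vee$, uniformity follows from the uniformity in the translation parameter that the proof provides. Cubes near the boundary contribute only $O(B\log(B)^{\rho-2})$.

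\textbf{Conclusion.} Set $\mathcal{K}=\{1\}$ and $\alpha_{i,1}=a_i$. Then $\mathcal{D}(B,\alpha)$ is exactly the anticanonical height condition, up to the rounding, which costs a factor $1+O(1/\min_i y_i)$ and is absorbed into the error. The polyhedron is $\stackP=\{t\geqslant0,\ \sum_i t_i\leqslant1\}$, since $\alpha_{i,1}\w_i^{-1}=1$. The maximising face is the whole simplex face, of dimension $k=\rho-1$, and it is not contained in any coordinate hyperplane. Theorem~\ref{theorem_hyperbola_method} therefore gives $S_f(B)\sim C_{f,M}\,c_{\stackP}\,B\log(B)^{\rho-1}$. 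Finally, I would compute $C_{f,M}=\tau(X)\,\nu$ of the normalised box, using Definition~\ref{mesure_picard_group}, so that $C_{f,M}c_{\stackP}$ matches $\alpha(X)\tau(X)$ through the integral $\int_{\cone^\vee,\ \langle\w_X^{-1},y\rangle=1}dy$ that defines $\alpha(X)$.
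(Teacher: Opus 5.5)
There is a genuine gap in your main route. A single basis $L_1,\dots,L_\rho$ chosen inside $\cone$ does not give an arithmetic function satisfying the hypothesis of Theorem~\ref{theorem_hyperbola_method}, unless $\cone$ is simplicial and the $L_i$ span its extremal rays. (Your reading of the exponent as $1$ is fine.)

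Put $\Lambda=\mathrm{cone}(L_1,\dots,L_\rho)^{\vee}$ and $u_B=\sum_i\log(B_i)L_i^*$. Since $\mathrm{cone}(L_i)\subseteq\cone$, you have $\Lambda\supseteq\cone^{\vee}$. Your partial sum $\sum_{y_i\leqslant B_i}f(y)$ counts the points with $h(P)\in\cone^{\vee}\cap(u_B-\Lambda)$. When $\Lambda\neq\cone^{\vee}$, this count is not $C\prod_iB_i^{a_i}(1+O(\min_iB_i^{-\Delta}))$ for any constant $C$, because it collapses when $u_B\in\Lambda$ is far from $\cone^{\vee}$.

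For instance, take the blow-up of $\mathbf{P}^2$ at a point with $L_1=H$ and $L_2=H-E$. This is a $\Z$-basis in $\cone$, and $\w_X^{-1}=2L_1+L_2$. For $B_2\geqslant B_1$, the condition $H_{L_2}(P)\leqslant B_2$ follows from $H_{L_1}(P)\leqslant B_1$, because $h(P)(E)\geqslant 0$. So the sum is $\asymp B_1^3$ rather than $\asymp B_1^2B_2$.

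Even formally, the constant you would obtain is $\frac{1}{(\rho-1)!}\nu(-\Lambda)\tau(X)$. This is strictly larger than $\alpha(X)\tau(X)$ when $\Lambda\supsetneq\cone^{\vee}$, so your final matching with $\alpha(X)$ cannot hold. Your parenthetical alternative, a simplicial decomposition of $\cone$, decomposes the wrong cone. $\cone^{\vee}$ is then an intersection of the dual pieces, not a union, so the count does not split.

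The missing idea is the paper's localisation on the dual side.
\begin{itemize}
\item Triangulate the slice $\cone^{\vee}\cap\{\langle\w_X^{-1},\cdot\rangle=1\}$ to write $\cone^{\vee}=\bigcup_j\Lambda_j$ with each $\Lambda_j$ simplicial.
\item For each $j$, take the basis with $\Lambda_j^{\vee}=\mathrm{cone}(L_1,\dots,L_\rho)$. This cone contains $\cone$ rather than being contained in it.
\item Build the condition $h(P)\in\Lambda_j$ into $f$. The box is then $\Lambda_j\cap(u_B-\Lambda_j)$, whose count is $\nu(-\Lambda_j)\tau(X)\prod_iB_i^{a_i}$ up to a uniform power saving.
\end{itemize}
Even this needs more than a uniform version of the torsor count, since the box grows rather than translates. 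The paper tiles $-\Lambda_j$ by translates $\mathrm{D}_n$ of a fixed box, uses the geometric decay of $\nu(\mathrm{D}_n)$, and crudely bounds the tiles straddling $\partial\Lambda_j$.

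The hyperbola method then gives $\frac{1}{(\rho-1)!}\nu(-\Lambda_j)\tau(X)B\log(B)^{\rho-1}$ for each cone. These constants add up to $\alpha(X)\tau(X)$ by Proposition~\ref{proposition_computation_effective_constant}. The overlaps $\Lambda_i\cap\Lambda_j$ are negligible, because they can be enclosed in thin simplicial cones of arbitrarily small $\nu$-mass.

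This gluing uses $h(T(\Q))\subseteq\cone^{\vee}$ exactly (Hypothesis~\ref{hypothesis_lower_bound_multi_height}). Your assertion of a lower bound up to $O(1)$ on orbifold effective classes is not known for general toric stacks.

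Two smaller points:
\begin{itemize}
\item The rounding cannot be absorbed through a factor $1+O(1/\min_iy_i)$, since the hyperbolic region contains $y$ with small coordinates. Instead, sandwich between the floor and ceiling versions of $f$, as the paper does.
\item Your fallback covering by cubes of size $O(1)$ would only give bounds up to a constant factor. The density $e^{\langle\w_X^{-1},y\rangle}$ puts a positive proportion of the mass in the cubes straddling $\langle\w_X^{-1},y\rangle=\log B$. You would need cells adapted to that hyperplane, or a mesh tending to $0$.
\end{itemize}
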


We can deduce the following corollary about the leading constant in the asymptotic distribution of rational points of toric stacks:

\begin{cor}
    For $X$ a toric stack, the Tamagawa number $\tau(X)$ of a split toric stack $X$ defined by the author in \cite[Definition 6.51]{bongiorno_toric_stacks} corresponds to the constant obtained by Darda and Yasuda in \cite[Theorem 4.4.4]{darda_yasuda_toric_stacks_batyrev}. That is to say, we have the equality:
$$(\rho - 1)! \ \w_H(X(\affine_{\Q})) = \prod_{p}
\left( 1 - \frac{1}{p} \right)^{\rank \Pic_{\orb}(X)}
\cdot
\mu_{X,p}(X(\Q_p)),
$$ where $\mu_{X,p}$ is a Tamagawa measure on $X(\Q_p)$ (see \cite[Definition 6.46]{bongiorno_toric_stacks}). Moreover for every $p$ such that $X_{\Z_p}$ is a tame DM stack, the following equality holds:
\[
\mu_{X,p}\bigl(X(\Q_p)\bigr)
= \frac{\sharp X(\F_p)}{p^{\dim(X)}}  + \frac{1}{p} \cdot
\sum\limits_{\stackS \in \twistsector} 
\frac{\sharp\, \stackS\!\left(\F_p\right)}{p^{\dim(\stackS)}}
\]where $\sharp$ means the groupoid cardinality (see \cite[Theorem 6.52]{bongiorno_toric_stacks}).
\end{cor}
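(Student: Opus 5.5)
The plan is to compare two asymptotic formulas for the same counting function and read off the equality of leading constants. Theorem \ref{theorem_article}, which we take as established via Theorem \ref{theorem_multi_height} and Theorem \ref{theorem_hyperbola_method}, gives for a toric stack $X$
$$\sharp T(\Q)_{h(P)(\w_X^{-1}) \leqslant \log(B)} \sim \alpha(X)\,\tau(X)\, B \log(B)^{\rho-1}.$$
Here the exponent $\langle \w_X^{-1},u\rangle$ is normalized to $1$ on the face where the anticanonical pairing is maximal. Darda and Yasuda \cite[Theorem 4.4.4]{darda_yasuda_toric_stacks_batyrev} give an asymptotic for the same quantity, namely the number of rational points of the stacky torus of orbifold anticanonical height at most $B$. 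Their formula has the shape $c_{DY}\, B\log(B)^{\rho-1}$, where $c_{DY}$ is built from $\w_H(X(\affine_{\Q}))$ and an effective-cone constant.

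The first step is to check that the two counting problems are literally the same. The height $h(P)(\w_X^{-1})$ defined through the multi-height map of \cite[Definition 3.26]{bongiorno_toric_stacks}, with the natural stacky data, must coincide with the logarithm of the orbifold anticanonical height used by Darda and Yasuda. Both are built from the same adelic metrization of the orbifold anticanonical bundle, including the raising data on twisted sectors. I would verify this by unwinding \cite[Definition 6.20]{bongiorno_toric_stacks} on the extended universal torsor $\stackT$ and comparing it with \cite[Definition 4.3]{darda2024batyrevmanin}. Once this is done, uniqueness of the limit of $\sharp T(\Q)_{\cdots}/(B\log(B)^{\rho-1})$ gives
$$\alpha(X)\,\tau(X) = c_{DY}.$$

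The second step is to strip off the geometric factor. I would compute $\alpha(X)$ explicitly from Definition \ref{definition_effective_leading_constant}, i.e.\ from the constant $c_{\mathcal{P}}$ and the factorial $(s-1-k)!$ produced by Theorem \ref{theorem_hyperbola_method} together with the measure $\nu$ of Definition \ref{mesure_picard_group}. This should be the familiar effective-cone volume $\alpha(X)=\frac{1}{(\rho-1)!}\int_{\cone^\vee}e^{-\langle \w_X^{-1},y\rangle}dy$, up to the normalization of $dy$. I would then match it against the corresponding cone factor in $c_{DY}$ and cancel. This yields $(\rho-1)!\,\w_H(X(\affine_{\Q})) = \tau(X)$.

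It remains to substitute the definition of $\tau(X)$ from \cite[Definition 6.51]{bongiorno_toric_stacks} as the regularized Euler product $\prod_p (1-1/p)^{\rank\Pic_{\orb}(X)}\mu_{X,p}(X(\Q_p))$; the archimedean factor is absorbed according to that normalization. The formula for $\mu_{X,p}(X(\Q_p))$ at primes where $X_{\Z_p}$ is tame, a sum of groupoid point counts over $X$ and the twisted sectors weighted by $p^{-\dim}$ and $1/p$, is exactly \cite[Theorem 6.52]{bongiorno_toric_stacks} and needs no new argument.

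The main obstacle is the bookkeeping of normalizations rather than any analytic difficulty. One must ensure that the Haar measure on $\Pic(X)^\vee_{\R}$ (covolume one for the dual of the orbifold Picard lattice), the powers of $\log B$, and the $(\rho-1)!$ appearing in $\alpha(X)$ agree with Darda–Yasuda's conventions for $\w_H$ and for their cone constant. I would first test the comparison on $X=\stackP(1,2)$ or $B\mu_n$-type examples, where both constants can be computed by hand, to fix any stray factors before writing the general identification.
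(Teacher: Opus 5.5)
Your proposal is essentially the paper's own (implicit) argument. The corollary comes from equating the leading constant $\alpha(X)\tau(X)$ of Theorem~\ref{theorem_article} with the Darda--Yasuda constant for the same orbifold anticanonical count on $T$, cancelling the common cone factor, and quoting \cite[Theorem 6.52]{bongiorno_toric_stacks} for the local densities $\mu_{X,p}(X(\Q_p))$; the one caveat, shared with the paper, is that Theorem~\ref{theorem_article} is proved here under Hypothesis~\ref{hypothesis_lower_bound_multi_height}, which is verified only for toric varieties and weighted projective stacks, so for a general toric stack the identification is conditional on that hypothesis.
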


\subsection{Outline of the article}

In the second section of this article, we extend the results of theorem \ref{theorem_multi_height} obtained by the author in \cite{bongiorno2024multiheightanalysisrationalpoints} and in \cite{bongiorno_toric_stacks} to the case where the multi-height is bounded above but not below. In the third section, we apply the general version of the hyperbola method developed by Pieropan and Schindler in \cite[Theorem 1.1]{pieropan_hyperbola_method} to study the number of rational points of bounded anti-canonical height such that the multi-height have values in $\Lambda \subset \cone^{\vee}$ a simplicial subcone. In the last section of this article, we explain how to deduce the final result from the decomposition of $\cone^{\vee}$ into simplicial subcones.

\subsection{Acknowledgements}
I would like to thank my PhD advisor, Emmanuel Peyre, for all the remarks and suggestions he made during the writing process of this article. I would also like to thank Marta Pieropan for her reading and for her interest in this work.

\section{Extension of the multi-height analysis to the case bounded above but not below}

Let $\Lambda$ be a simplicial subcone of $\cone^{\vee}$ of maximal dimension, and let $$\base = (L_1, \dots, L_\rho)$$be a basis of $\Pic(X)_{\R}$ such that $$\Lambda^{\vee} = \mathrm{cone}(L_1, \dots, L_{\rho}).$$The goal of this section is to show that the arithmetic functions $f_{\downarrow}$ and $f_{\uparrow}$ from $ \N^{\rho}$ to $ \R_+ $, defined by:
\begin{equation}\label{equation_definition_arithmetic_functions}
\begin{aligned}
f_{\downarrow}(y)
&=
\sharp \Bigl\{
P \in T(\Q)
\;\Big|\;
h(P) \in \Lambda
\ \text{and}\
\lfloor H_{L_i}(P) \rfloor = y_i
\ \forall i \in \{1,\ldots,\rho\}
\Bigr\}, \\
f_{\uparrow}(y)
&=
\sharp \Bigl\{
P \in T(\Q)
\;\Big|\;
h(P) \in \Lambda
\ \text{and}\
\lceil H_{L_i}(P) \rceil = y_i
\ \forall i \in \{1,\ldots,\rho\}
\Bigr\}.
\end{aligned}
\end{equation}
verify the first hypothesis of Theorem \ref{theorem_hyperbola_method} in order to apply the hyperbola method to estimate $$\sharp T(\Q)_{h \in \Lambda \mid h(P)(\w_{X}^{-1}) \leqslant \log(B)} .$$

\subsection{Estimation of the  multi-height behaviour with values in a simplicial subcone}

For $B = (B_1, \dots, B_{\rho}) \in (\R_{\geqslant 1})^{\rho}$, we denote $u_B \in \cone^{\vee}$ the vector such that $u_B(L_i) = \log(B_i)$ for any $i$. We want to estimate the cardinality of $$T(\Q)_{h \in \Lambda \cap (-\Lambda)_B }= \left\{ P \in T(\Q) \mid \forall \ i,  1 \leqslant H_{L_i}(P) \leqslant B_i \right\} $$ where $$ (-\Lambda)_B = -\Lambda + u_B .$$

To this end, we begin by decomposing 

$$- \Lambda = \{ a \in \Pic(X)^{\vee}_{\R} \mid a(L_i) \leqslant 0 \}$$ 
into a union of compact polyhedra $\{ \mathrm{D}_n \}_{n \in (\N^*)^{\rho}}$ where 
$$\mathrm{D}_n = \{ a \in \Pic(X)^{\vee}_{\R} \mid - n_i  \beta_i \leqslant a(L_i) \leqslant - (n_i-1) \beta_i \}$$ 
with $\beta = (\beta_1, \dots, \beta_{\rho})_{\base} \in \Pic(X)^{\vee}_{\R}$ which will be suitably chosen. For $B = (B_1,..,B_{\rho}) \in \R_{\geqslant 1}^{\rho}$, we define 
$$\mathrm{D}_{n,B} = \mathrm{D}_n + u_B.$$We will deduce the estimate of $\sharp T(\Q)_{h \in \Lambda \cap (-\Lambda)_B}$ from the known estimates of $\sharp T(\Q)_{h \in \mathrm{D}_{n,B}}$. Note that since $$\sharp T(\Q)_{h \in \Lambda \cap (-\Lambda)_{\lfloor B \rfloor}} \leqslant \sharp T(\Q)_{h \in \Lambda \cap (-\Lambda)_B} \leqslant \sharp T(\Q)_{h \in \Lambda \cap (-\Lambda)_{\lceil B \rceil}},$$where $\lfloor B \rfloor = (\lfloor B_1 \rfloor,..,\lfloor B_{\rho} \rfloor ) $ and $\lceil B \rceil = (\lceil B_1 \rceil,..,\lceil B_{\rho} \rceil ) $, it is sufficient to consider the case where $B \in (\N^*)^{\rho}$, in order to get the estimate for $B \in (\R^*)^{\rho} $.

\begin{lemma}
There exists $\beta \in  \Lambda^{\circ}$, in the interior of $\Lambda$, such that for all $n \in (\N^*)^{\rho}$, for all $B \in (\N^*)^{\rho}$, and for all $P \in T(\Q)$, 
$$h(P) \neq - n \cdot \beta + u_B .$$ 
\end{lemma}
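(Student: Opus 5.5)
The idea is a countability argument. The set $T(\Q)$ is countable, so its image $h(T(\Q)) \subset \Pic(X)^{\vee}_{\R}$ is countable. The set of pairs $(n,B) \in (\N^*)^{\rho} \times (\N^*)^{\rho}$ is also countable. We need $\beta$ with
$$-n\cdot\beta + u_B \neq h(P)$$
for all $n$, $B$, $P$. Here $n\cdot\beta$ denotes the element with coordinates $(n_1\beta_1,\dots,n_\rho\beta_\rho)$ in the dual basis $\base^\vee$.

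In coordinates $a \mapsto (a(L_1),\dots,a(L_\rho))$ with respect to the dual basis, the condition $h(P) = -n\cdot\beta+u_B$ reads
$$h(P)(L_i) = -n_i\beta_i + \log(B_i) \quad \text{for all } i.$$
It therefore suffices to forbid it in the first coordinate alone: for each triple $(n_1,B_1,P)$, the single value
$$\beta_1 = \frac{\log(B_1) - h(P)(L_1)}{n_1}$$
is excluded. This gives a countable subset $E \subset \R$ of forbidden values of $\beta_1$.

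The admissible set is the interior $\Lambda^{\circ}$. Since $\Lambda^{\vee} = \mathrm{cone}(L_1,\dots,L_\rho)$ and $\Lambda$ is simplicial of maximal dimension, biduality gives
$$\Lambda = \{ a \mid a(L_i) \geqslant 0 \ \forall i\}, \qquad \Lambda^{\circ} = \{ a \mid a(L_i) > 0 \ \forall i\},$$
which in these coordinates is the open orthant $\R_{>0}^{\rho}$. I choose $\beta_1 \in \R_{>0}\setminus E$, which exists because $\R_{>0}$ is uncountable and $E$ is countable. I take $\beta_2,\dots,\beta_\rho>0$ arbitrary. Then $\beta \in \Lambda^{\circ}$, and no equality $h(P) = -n\cdot\beta + u_B$ can hold, since it would force $\beta_1 \in E$.

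The only points needing care are bookkeeping. First, the meaning of $n\cdot\beta$ should be checked as the coordinatewise product in the basis dual to $\base$, consistent with the definition of $\mathrm{D}_n$. Second, the interior of $\Lambda$ should be identified with the positive orthant, which follows from simpliciality via biduality as above. There is no genuine analytic obstacle. If one wants the stronger property that $h(T(\Q))$ avoids all the boundary hyperplanes $a(L_i) = -k\beta_i + \log(B_i)$, which is what is really needed later for the $\mathrm{D}_{n,B}$ to tile without double counting, the same argument applies coordinatewise. For each $i$ one excludes the countable set
$$\left\{ \frac{\log(m) - h(P)(L_i)}{k} \;\Big|\; k \in \N^*,\ m \in \N^*,\ P \in T(\Q) \right\}$$
from the choices of $\beta_i$, and I would present this version, since it implies the statement.
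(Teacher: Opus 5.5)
Your proof is correct and is essentially the paper's countability argument. The paper gets countability of $h(T(\Q))$ from Northcott's property for the anticanonical height and then simply asserts that a suitable $\beta\in\Lambda^{\circ}$ exists; you use countability of $T(\Q)$ directly and make the existence explicit by excluding a countable set of values of $\beta_1$. Your closing remark is also well taken: the identity $\sharp T(\Q)_{h \in \Lambda \cap (-\Lambda)_B} = \sum_{n} \sharp T(\Q)_{h \in \Lambda \cap \mathrm{D}_{n,B}}$ used right after the lemma needs $h(T(\Q))$ to avoid the shared faces $a(L_i) = -k\beta_i + \log(B_i)$, not just the vertices $-n\cdot\beta+u_B$. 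Your coordinatewise version supplies exactly that.
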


\begin{proof}
Since $T$ satisfies Northcott's property for the anticanonical height, the set $T(\Q)$ can be written as a countable union of finite sets 
$$T(\Q)_{h(P)(\w_{X}^{-1}) \leqslant N},$$ 
where $N \in \N^*$. Hence, $h(T(\Q))$ is countable, and it is possible to choose $\beta \in \Lambda^{\circ}$ such that 
$$h(T(\Q)) \cap \{  - n \cdot \beta + u_B \mid n \in (\N^*)^{\rho}, B \in (\N^*)^{\rho}\} = \varnothing.$$
\end{proof}

We choose such $\beta \in \Lambda^{\circ}$ and then we have:
$$\sharp T(\Q)_{h \in \Lambda \cap (-\Lambda)_B} = \sum\limits_{n \in (\N^*)^{\rho}} \sharp T(\Q)_{h \in \Lambda \cap \mathrm{D}_{n,B}} $$ for any $B \in (\N^*)^{\rho}$.

\subsubsection{First estimate}

Let $a_1,..,a_r \in \R$ and $b_1,..,b_r \in \R$ such that $a_i < b_i$ for any $i$. We write $\mathrm{D}(a,b)$ for the compact subset of $\Pic(X)^{\vee}_{\R}$ with $$\mathrm{D}(a,b) = \{ \varphi \in \Pic(X)^{\vee}_{\R} \mid a_i \leqslant \varphi(L_i) \leqslant b_i \ \forall i\}  $$ and we set $$\mathrm{D}(a,b)_B = \mathrm{D}(a,b) + u_B  .$$We also need to introduce the following notation:
$$
\mathcal{D}(a,b)_B := \{\, y \in \stackT(\R) \mid h_{\stackT,\infty}(y) \in \mathrm{D}(a,b)_B \,\}.
$$The same method as in \cite[Corollary 4.8 and Theorem 4.16]{bongiorno2024multiheightanalysisrationalpoints} and in \cite[Corollary 7.8 and Theorem 7.16]{bongiorno_toric_stacks}, which relies on Davenport’s result on the geometry of numbers (see \cite{davenport_geometry_numbers} and \cite[Proposition 24]{bhargava_counting_cubic_field}) yields the following result:

\begin{theorem}\label{theorem_multi_height_simplicial_cone}
    There exists $\Delta > 0$ such that $$\sharp\left( T(\Q) \right)_{h \in \mathrm{D}(a,b)_B} = \nu(\mathrm{D}(a,b) ) . \tau(X) .  \prod\limits_{i = 1}^{\rho} B_i^{\langle \w_X^{-1},L_i^* \rangle} \left( 1 + O\left(\min\limits_{1 \leqslant i \leqslant \rho}(B_i)^{- \Delta} \right)\right) $$where the implied constant depends only on $\sharp \Sigma(1)$, on the line bundles $L_i$ and on the diameter of $\mathcal{D}(a,b)_1$.
\end{theorem}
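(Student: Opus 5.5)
We follow the torsor method of \cite[Theorem 4.16]{bongiorno2024multiheightanalysisrationalpoints} and \cite[Theorem 7.16]{bongiorno_toric_stacks}, keeping track of the shift $u_B$ in each coordinate $L_i$ separately instead of along a single direction $\log(B)u$.

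\emph{Step 1: reduction to lattice points.} Rational points of $T$ lift to integral points of the (extended) universal torsor $\stackT$, up to the action of the finite group $\stackT(\Z)$ of units and subject to coprimality conditions. The condition $h(P) \in \mathrm{D}(a,b)_B$ becomes $h_{\stackT,\infty}(y) \in \mathrm{D}(a,b)_B$ for the lift $y$, because the non-archimedean local heights vanish on primitive integral lifts. This gives
\[
\sharp (T(\Q))_{h \in \mathrm{D}(a,b)_B} = \frac{1}{\sharp \stackT(\Z)} \sharp \bigl\{ y \in \stackT(\Z) \cap \mathcal{D}(a,b)_B \text{ primitive} \bigr\}.
\]
We remove the coprimality condition by Möbius inversion over the ideals $d = (d_\lambda)_{\lambda \in \Sigma(1)}$ that encode the gcd conditions. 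In the stacky case this includes the twisted-sector coordinates, exactly as in \cite[Section 7]{bongiorno_toric_stacks}.

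\emph{Step 2: homogeneity and Davenport.} The region $\mathcal{D}(a,b)_B$ is the image of $\mathcal{D}(a,b)_1$ under the action of the torus element $t_B$ of the Néron–Severi torus that realises the translation $u_B$. Concretely, each coordinate $y_\lambda$ is scaled by $\prod_i B_i^{\langle [D_\lambda], L_i^* \rangle}$. Since $u_B \in \cone^{\vee}$, all these exponents are nonnegative. Applying Davenport's lemma \cite{davenport_geometry_numbers}, \cite[Proposition 24]{bhargava_counting_cubic_field} to the sublattice of multiples of $d$ gives a main term $\Vol(\mathcal{D}(a,b)_B)/\prod d_\lambda$. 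The volume scales as $\prod_i B_i^{\langle \w_X^{-1}, L_i^*\rangle}\Vol(\mathcal{D}(a,b)_1)$, since $\sum_\lambda [D_\lambda] = \w_X^{-1}$. The archimedean computation in \cite{bongiorno2024multiheightanalysisrationalpoints} identifies $\Vol(\mathcal{D}(a,b)_1)$ with $\nu(\mathrm{D}(a,b))$ times the archimedean Tamagawa factor. Summing the Möbius series over $d$ recovers the Euler product, and hence the factor $\tau(X)$.

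\emph{Step 3: the error term, which is the main obstacle.} Davenport's error is bounded by volumes of projections of the region. A projection that forgets a coordinate $y_\lambda$ loses a factor $\prod_i B_i^{\langle [D_\lambda], L_i^*\rangle}$. To extract a saving of $\min_i B_i^{-\Delta}$, we need $\langle [D_\lambda], L_i^*\rangle$ to be positive for each relevant $\lambda$ and each $i$. That fails in general, since a $[D_\lambda]$ may lie on a face of $\cone$.

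We would first try to handle this as the earlier papers handle $u \in (\cone^{\vee})^{\circ}$. The projections relevant to a coordinate $y_\lambda$ only involve the subsets of $\Sigma(1)$ that index maximal cones. The associated monomials in the $B_i$ are then of the form $\prod_i B_i^{\langle \w_X^{-1} - [D_\lambda] - \dots, L_i^*\rangle}$, and the missing classes form an effective class whose pairing with each $L_i^*$ is at least some $\delta > 0$. This holds because $\Lambda$ is a simplicial subcone of maximal dimension, so the $L_i$ lie in $\cone$ and the $L_i^*$ lie in $\Lambda \subset \cone^{\vee}$.

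We would then balance the Möbius tail, truncating at $\sharp d \leqslant \min_i B_i^{\eta}$, against the Davenport error to obtain a uniform $\Delta$. The implied constants come from Davenport's lemma, which depends only on the dimension $\sharp\Sigma(1)$ and on the diameter of $\mathcal{D}(a,b)_1$, together with the exponents $\langle [D_\lambda], L_i^*\rangle$. This gives the claimed dependence of the implied constant.
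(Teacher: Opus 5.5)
Your Steps 1 and 2 follow the paper's route. The paper simply reruns the torsor, M\"obius and Davenport argument of the earlier papers, with each coordinate $y_\lambda$ rescaled by $e^{\langle [D_\lambda],u_B\rangle}=\prod_i B_i^{\langle [D_\lambda],L_i^*\rangle}$. (A small slip: the finite group you divide by is $T_{\mathrm{NS}}(\Z)$, not $\stackT(\Z)$.)

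\textbf{The gap is in Step 3.} This is exactly where the paper's proof has its content. You assert that a saving of $\min_i(B_i)^{-\Delta}$ needs $\langle [D_\lambda],L_i^*\rangle>0$ for every $i$. To get around the failure of this, you then assert two things: that the relevant projections are indexed by maximal cones, and that the forgotten classes pair with each $L_i^*$ by at least some $\delta>0$. Neither holds.
\begin{itemize}
\item Davenport's error term contains the projections that forget a single coordinate $y_\lambda$. Relative to the main term, these are governed by $\prod_i B_i^{\langle [D_\lambda],L_i^*\rangle}$ alone, and nothing in Davenport's lemma restricts attention to maximal-cone subsets.
\item $L_i^*\in\Lambda\subset\cone^{\vee}$ gives only nonnegativity of the pairings. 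In the triangulation used later, the $L_i^*$ are typically extremal rays of $\cone^{\vee}$. Such an $L_i^*$ vanishes on a whole facet of $\cone$, so every sum of $[D_\lambda]$'s on that facet pairs to zero with it.
\item The inclusion for the $L_i$ also goes the other way from what you state: $\cone\subset\Lambda^{\vee}=\mathrm{cone}(L_1,\dots,L_\rho)$, so the $L_i$ need not be effective.
\end{itemize}
As written, Step 3 does not establish the error bound.

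\textbf{The missing observation} is that positivity for each individual $i$ is not needed. All exponents are $\geqslant 0$ and $B_i\geqslant 1$, so
\[
e^{\langle [D_\lambda],u_B\rangle}=\prod_{i=1}^{\rho}B_i^{\langle [D_\lambda],L_i^*\rangle}\geqslant \Bigl(\min_{1\leqslant i\leqslant\rho}B_i\Bigr)^{\langle [D_\lambda],\sum_{i} L_i^*\rangle}.
\]
The vector $\sum_i L_i^*$ lies in $\Lambda^{\circ}\subset(\cone^{\vee})^{\circ}$, so it pairs strictly positively with every nonzero $[D_\lambda]\in\cone$. Hence $a=\min_\lambda\langle[D_\lambda],\sum_i L_i^*\rangle>0$, and $e^{\min_\lambda\langle[D_\lambda],u_B\rangle}\geqslant(\min_i B_i)^{a}$.

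This is precisely the paper's proof. It substitutes $e^{\min_\lambda\langle[D_\lambda],u_B\rangle}$ for $B^{\min_\lambda\langle[D_\lambda],u\rangle}$ in the error terms of the earlier lattice-point and M\"obius estimates, and takes $\Delta=a/4$. With this in place of your Step 3, your outline becomes the paper's argument.
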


\begin{proof}
  Denote by \(e^{u_B}\in T_{\mathrm{NS}}(\R)\) the image of \(u_B\) under the exponential map
\(\operatorname{Pic}(X)^{\vee}_{\R}\to T_{\mathrm{NS}}(\R)\).
To prove the theorem it is enough to observe that
\[
e^{\langle [D_{\lambda}],u_B \rangle}
= \prod_{i = 1}^{\rho} B_i^{\langle [D_{\lambda}],L_i^* \rangle}.
\]
The dependence of the error term is a consequence of Davenport's geometry of numbers result
(see \cite[Proposition 4.1]{bongiorno2024multiheightanalysisrationalpoints}).
Hence it is enough to replace
\[
B^{\min\limits_{\lambda \in \Sigma(1)} \langle [D_{\lambda}] , u \rangle}
\]
in \cite{bongiorno2024multiheightanalysisrationalpoints} (respectively in \cite{bongiorno_toric_stacks})
by
\[
e^{\min\limits_{\lambda \in \Sigma(1)} \langle [D_{\lambda}] , u_B \rangle}.
\]
With this substitution one can adapt the lattice-point counting estimates of the proof of 
\cite[Theorem 4.6]{bongiorno2024multiheightanalysisrationalpoints}
(respectively \cite[Theorem 7.6]{bongiorno_toric_stacks}) and the Möbius inversion arguments
(see \cite[Theorem 4.14]{bongiorno2024multiheightanalysisrationalpoints} and \cite[Theorem 7.13]{bongiorno_toric_stacks})
to obtain the desired asymptotic.

Finally, set
\[
a = \min_{\lambda \in \Sigma(1)} \langle [D_{\lambda}] , \sum_{i = 1}^{\rho} L_i^* \rangle  > 0.
\]
Then we have the inequality
\[
e^{\min\limits_{\rho \in \Sigma(1)} \langle [D_{\rho}] , u_B \rangle}
\geqslant
\bigg(\min_{1 \leqslant i \leqslant \rho} B_i\bigg)^{a},
\]
which yields the claimed result, if we take $\Delta = \frac{1}{4} \cdot a$.

\end{proof}

Now, we observe that for sufficiently large $n$ (whose size can be controlled in terms of $\min(B_i)$), the set $T(\Q)_{h \in \Lambda \cap \mathrm{D}_{n,B}}$ is empty.

\begin{lemma}\label{lemma_n_assez_grand_ensemble_vide}
If there exists $i \in \{1,..,\rho\}$, such that $$n_i > \frac{1}{\beta_i}\log(B_i) + 1$$then for all $B \in (\N^*)^{\rho}$, we have: 
$$T(\Q)_{h \in \Lambda \cap \mathrm{D}_{n,B}} = \varnothing.$$
\end{lemma}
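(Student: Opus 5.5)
The plan is a direct contradiction argument: a point $P$ in the set would force $h(P)(L_i)$ to be both nonnegative and strictly negative.

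Assume some $n\in(\N^*)^{\rho}$ and index $i$ satisfy $n_i > \frac{1}{\beta_i}\log(B_i)+1$, and suppose $P \in T(\Q)_{h \in \Lambda \cap \mathrm{D}_{n,B}}$.

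\emph{Upper bound from $\mathrm{D}_{n,B}$.} Write $h(P) = a + u_B$ with $a \in \mathrm{D}_n$. Evaluating on $L_i$ gives
$$h(P)(L_i) = a(L_i) + u_B(L_i) \leqslant -(n_i-1)\beta_i + \log(B_i).$$
Since $\beta \in \Lambda^{\circ}$ and $L_i$ is a nonzero element of $\Lambda^{\vee}$, we have $\beta_i = \beta(L_i) > 0$. This positivity is what allows dividing by $\beta_i$ in the hypothesis. The hypothesis then rewrites as $(n_i-1)\beta_i > \log(B_i)$, hence
$$h(P)(L_i) < 0.$$

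\emph{Lower bound from $\Lambda$.} Since $h(P)\in\Lambda$ and $L_i\in\Lambda^{\vee}$, we get $h(P)(L_i)\geqslant 0$. This contradicts the strict inequality above, so the set is empty.

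No step is a real obstacle. The only points needing care are the positivity $\beta_i>0$, which holds because an interior point of a full-dimensional cone pairs strictly positively with every nonzero element of the dual cone, and the sign conventions in $\mathrm{D}_n$ and $u_B(L_i)=\log(B_i)$. Note that $B\in(\N^*)^{\rho}$ gives $\log B_i\geqslant 0$, although the argument does not actually need this.
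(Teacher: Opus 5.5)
Your proof is correct and is essentially the paper's argument: the paper also supposes such a $P$ exists and derives $\log(B_i) - (n_i-1)\beta_i \geqslant h(P)(L_i) \geqslant 0$, which contradicts the hypothesis on $n_i$. Your explicit justification that $\beta_i = \beta(L_i) > 0$ is a detail the paper leaves implicit. It holds because $\beta$ lies in the interior of $\Lambda$ and therefore pairs strictly positively with the nonzero generators $L_i$ of $\Lambda^{\vee}$.
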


\begin{proof}
We argue by contradiction and assume that there exists $P \in T(\Q)$ such that $h(P) \in \Lambda \cap \mathrm{D}_{n,B}$. Then, for every $i \in \{1,\dots,\rho\}$, we have
\[
\log(B_i) - (n_i - 1)\beta_i \geqslant h(P)(L_i) \geqslant 0.
\]
Thus, under our assumption that there exists $i \in \{1,\dots,\rho\}$ such that
\[
n_i > \frac{1}{\beta_i}\log(B_i) + 1,
\]
we get a contradiction.
\end{proof}

Thus, we first estimate
\[
\sum_{n_1 = 1}^{\frac{1}{\beta_1}\log(B_1) + 1}
\cdots
\sum_{n_{\rho} = 1}^{\frac{1}{\beta_{\rho}}\log(B_{\rho}) + 1}
\sharp T(\Q)_{\,h \in \mathrm{D}_{n,B}}.
\]

Before proving the main result of this section, we need a bound on the remainder term of the series $\sum \nu(\mathrm{D}_n)$:

\begin{lemma}\label{lemma_majoration_reste_serie}
Let $B \in (\mathbf{N}^*)^{\rho}$. There exist constants $c>0$ and $d>0$ such that
\[
\sum_{\substack{n_1,\dots,n_\rho \ge 1\\ \exists i\in\{1,\dots,\rho\}:\,
n_i \ge \frac{1}{\beta_i}\log(B_i)+1}}
\nu(\mathrm{D}_n)
\le
\frac{c}{\min(B_i)^d}.
\]
\end{lemma}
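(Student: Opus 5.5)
Since $\nu$ has density $y\mapsto e^{\langle \w_X^{-1},y\rangle}$, the plan is to compute $\nu(\mathrm{D}_n)$ explicitly in the coordinates given by the basis $\base$, and then bound the tail by a union bound over the index $i$ that is large.

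\textbf{Step 1: explicit formula for $\nu(\mathrm{D}_n)$.} Use the linear coordinates $y\mapsto (y(L_1),\dots,y(L_\rho))$ on $\Pic(X)^{\vee}_{\R}$. The Haar measure $dy$ becomes $c_0\, dt_1\cdots dt_\rho$, where $c_0>0$ is a constant depending only on $\base$ and the lattice normalisation. Write $\w_X^{-1}=\sum_i w_i L_i$, with $w_i=\langle \w_X^{-1},L_i^*\rangle$. Because $\w_X^{-1}$ lies in the interior of $\cone$ and $\Lambda^\vee=\mathrm{cone}(L_1,\dots,L_\rho)\supset \cone$ (as $\Lambda\subset\cone^\vee$), every coefficient satisfies $w_i>0$. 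Then
\[
\nu(\mathrm{D}_n)=c_0\prod_{i=1}^{\rho}\int_{-n_i\beta_i}^{-(n_i-1)\beta_i} e^{w_i t}\,dt
= c_0\prod_{i=1}^{\rho}\frac{1-e^{-w_i\beta_i}}{w_i}\,e^{-w_i(n_i-1)\beta_i}.
\]
In particular $\nu(\mathrm{D}_n)$ factors as a product of geometric sequences $q_i^{\,n_i-1}$ with ratios $q_i=e^{-w_i\beta_i}\in(0,1)$, using $\beta_i>0$ since $\beta\in\Lambda^\circ$. Hence the full series $\sum_n\nu(\mathrm{D}_n)=\nu(-\Lambda)$ converges.

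\textbf{Step 2: union bound over the large index.} The set of $n$ with some $n_i\geq \frac{1}{\beta_i}\log(B_i)+1$ is contained in the union over $i$ of the sets $\{n_i\geq \frac{1}{\beta_i}\log(B_i)+1\}$. For a fixed $i$, sum freely over the $n_j$ with $j\neq i$; these sums give the bounded constants $\frac{1-q_j}{w_j}\cdot\frac{1}{1-q_j}=\frac{1}{w_j}$. The sum over $n_i$ is a geometric tail:
\[
\sum_{n_i-1\geq \log(B_i)/\beta_i} q_i^{\,n_i-1}\leq \frac{q_i^{\log(B_i)/\beta_i}}{1-q_i}=\frac{B_i^{-w_i}}{1-q_i}.
\]
Thus the $i$-th term is bounded by $C_i B_i^{-w_i}$.

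\textbf{Step 3: conclusion.} Set $d=\min_i w_i>0$. Since $B_i\geq 1$, we have $B_i^{-w_i}\leq B_i^{-d}\leq \min_j(B_j)^{-d}$. Summing over $i$ gives the bound with $c=\sum_i C_i$, which depends only on $\base$, $\beta$ and $\w_X^{-1}$, and not on $B$.

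The only step needing care is the positivity $w_i>0$, which is exactly what makes the tails decay. It rests on $\w_X^{-1}$ lying in the interior of the effective cone. For toric varieties this holds because $\w_X^{-1}=\sum_\lambda [D_\lambda]$ with the $[D_\lambda]$ spanning the cone. In the stacky case I would invoke the analogous statement for the orbifold anticanonical class, namely that it is a positive combination of all $[D_\lambda]$ with $\lambda\in\Sigma(1)$. This is consistent with the constant $a>0$ used in the proof of Theorem \ref{theorem_multi_height_simplicial_cone}.
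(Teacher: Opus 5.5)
Your proposal is correct and follows essentially the same route as the paper: factor $\nu(\mathrm{D}_n)$ into geometric sequences with ratios $e^{-\beta_i\langle \w_X^{-1},L_i^*\rangle}<1$, apply a union bound over the index $i$ with $n_i \geqslant \frac{1}{\beta_i}\log(B_i)+1$ to get a bound $\ll \sum_i B_i^{-\langle \w_X^{-1},L_i^*\rangle}$, and take $d=\min_i \langle \w_X^{-1},L_i^*\rangle$. Your explicit formula is slightly more careful than the paper's, whose identity has exponent $-n_i\beta_i$ where it should be $-(n_i-1)\beta_i$; this only changes a multiplicative constant, so it does not affect the conclusion.
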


\begin{proof}
We use the identity
\[
\nu(\mathrm{D}_n)
=
\prod_{i=1}^{\rho}
\exp\!\left(-n_i\,\beta_i\,\w_X^{-1}(L_i^*)\right)
\,\nu(\mathrm{D}_1).
\]

Since $\beta_i>0$ and $\w_X^{-1}(L_i^*)>0$ for all $i$, because
$\beta\in\Lambda^{\circ}$ and $$\w_X^{-1}\in\cone^{\circ}\subset(\Lambda^\vee)^{\circ},$$the associated series are convergent geometric series.

Estimating the remainder, we obtain
\[
\sum_{\substack{n_1,\dots,n_\rho \ge 1\\ \exists i\in\{1,\dots,\rho\}:\,
n_i \ge \frac{1}{\beta_i}\log(B_i)+1}}
\nu(\mathrm{D}_n)
\ll
\sum_{i=1}^{\rho}
\exp\!\left(-\beta_i\Bigl(\tfrac{1}{\beta_i}\log(B_i)+1\Bigr)\w_X^{-1}(L_i^*)\right).
\]

Hence,
\[
\sum_{\substack{n_1,\dots,n_\rho \ge 1\\ \exists i\in\{1,\dots,\rho\}:\,
n_i \ge \frac{1}{\beta_i}\log(B_i)+1}}
\nu(\mathrm{D}_n)
\ll
\sum_{i=1}^{\rho}
B_i^{-\w_X^{-1}(L_i^*)}.
\]

The result follows by setting
$$
d := \min_{1\le i\le \rho} \w_X^{-1}(L_i^*) > 0$$and absorbing constants into $c$.
\end{proof}

We can now state and prove the following proposition:

\begin{prop}\label{proposition_multi_height_not_bounded_below}
Let $B = (B_1,..,B_{\rho}) \in (\R_{\geqslant 1})^{\rho}$. Then there exist constants $K > 0$ and $\delta > 0$ such that
$$\left|\sum_{n_1 = 1}^{\frac{1}{\beta_1}\log(B_1) + 1}
\cdots
\sum_{n_{\rho} = 1}^{\frac{1}{\beta_{\rho}}\log(B_{\rho}) + 1}
\sharp T(\Q)_{\,h \in \mathrm{D}_{n,B}}. - \nu(-\Lambda) \, \tau(X) \, \prod\limits_{i = 1}^{\rho} B_i^{\langle \w_{X}^{-1},L_i^* \rangle} \right| \leqslant K \cdot \frac{\prod\limits_{i = 1}^{\rho} B_i^{\langle \w_{X}^{-1},L_i^* \rangle}}{\min(B_i)^{\delta}}.$$
\end{prop}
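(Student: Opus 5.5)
The plan is to apply Theorem~\ref{theorem_multi_height_simplicial_cone} to each box $\mathrm{D}_{n,B}$ in the truncated range, sum the resulting main terms and error terms, and then complete the finite sum of main terms to the full series $\sum_n \nu(\mathrm{D}_n)=\nu(-\Lambda)$ using Lemma~\ref{lemma_majoration_reste_serie}.

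\textbf{Step 1: each box.} Each $\mathrm{D}_n$ is a box $\mathrm{D}(a,b)$ in the coordinates given by $\base$, with $a_i=-n_i\beta_i$ and $b_i=-(n_i-1)\beta_i$. Since $\mathrm{D}_n=\mathrm{D}_1-(n-1)\cdot\beta$, the diameter of the corresponding region $\mathcal{D}(a,b)_1$ in $\stackT(\R)$ has to be controlled uniformly in $n$. I would do this by rewriting the shift by $-(n-1)\cdot\beta$ as part of the parameter $u_B$. Concretely, $\mathrm{D}_{n,B}=\mathrm{D}_1+u_{B'}$ with $\log B'_i=\log B_i-(n_i-1)\beta_i$, which is $\geqslant 0$ in the truncated range. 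Theorem~\ref{theorem_multi_height_simplicial_cone} applied to the fixed polyhedron $\mathrm{D}_1$ then gives
\[
\sharp T(\Q)_{h\in \mathrm{D}_{n,B}}=\nu(\mathrm{D}_1)\tau(X)\prod_i B_i'^{\langle \w_X^{-1},L_i^*\rangle}\Bigl(1+O\bigl(\min_i B_i'^{-\Delta}\bigr)\Bigr),
\]
with an implied constant independent of $n$ and $B$. By the identity used in the proof of Lemma~\ref{lemma_majoration_reste_serie}, the main term equals $\nu(\mathrm{D}_n)\tau(X)\prod_i B_i^{\langle\w_X^{-1},L_i^*\rangle}$.

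\textbf{Step 2: summing the main terms.} Summing over the truncated range gives $\tau(X)\prod_i B_i^{\langle\w_X^{-1},L_i^*\rangle}$ times a partial sum of $\sum_n\nu(\mathrm{D}_n)=\nu(-\Lambda)$. By Lemma~\ref{lemma_majoration_reste_serie}, the missing tail is at most $c\,\min(B_i)^{-d}$.

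\textbf{Step 3: summing the error terms (the main obstacle).} The summed error is
\[
\tau(X)\,\nu(\mathrm{D}_1)\prod_i B_i^{\langle\w_X^{-1},L_i^*\rangle}\sum_n \prod_i e^{-(n_i-1)\beta_i\w_X^{-1}(L_i^*)}\,\min_i B_i'^{-\Delta}.
\]
The difficulty is that $B'_i$ can be close to $1$ when $n_i$ is near the cutoff, so $\min_i B_i'^{-\Delta}$ gives no saving there. I would split the sum into two parts.

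\emph{Part (i): all $n_i\leqslant \frac{1}{2\beta_i}\log B_i+1$.} Here $B'_i\geqslant B_i^{1/2}$, so $\min_i B_i'^{-\Delta}\leqslant \min(B_i)^{-\Delta/2}$. The remaining geometric series converges, so this part contributes $O\bigl(\min(B_i)^{-\Delta/2}\bigr)$ relative to the main term.

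\emph{Part (ii): some $n_i$ exceeds half its cutoff.} Here I bound the error factor $1+O(\cdot)$ trivially by $O(1)$. The weight $e^{-(n_i-1)\beta_i\w_X^{-1}(L_i^*)}$ is then at most $B_i^{-\w_X^{-1}(L_i^*)/2}$, and the other coordinates still sum to a convergent geometric series. This part is therefore $O\bigl(\min(B_i)^{-d/2}\bigr)$, by the same computation as in Lemma~\ref{lemma_majoration_reste_serie}.

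\textbf{Conclusion.} Taking $\delta=\min(\Delta,d)/2$ and collecting the constants into $K$ gives the stated bound. The upper limits of summation may not be integers; I would interpret them as integer parts, which only shifts the cutoffs by at most $1$. The passage from integer $B$ to real $B\geqslant 1$ is handled by the monotonicity remark made before the lemmas, together with the fact that $\lceil B_i\rceil/\lfloor B_i\rfloor\leqslant 2$.
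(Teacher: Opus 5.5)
Your proof is correct. Its skeleton matches the paper's: apply Theorem~\ref{theorem_multi_height_simplicial_cone} box by box, use the triangle inequality, and bound the missing tail of $\sum_n \nu(\mathrm{D}_n)=\nu(-\Lambda)$ by Lemma~\ref{lemma_majoration_reste_serie}. You differ in how the per-box errors are summed.

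The paper asserts, for every $n$ in the truncated range, the uniform absolute bound $K\prod_i B_i^{\langle \w_X^{-1},L_i^*\rangle}\min(B_i)^{-\Delta}$, and simply adds these bounds up. Taken literally, this loses a factor $\prod_i\bigl(\frac{1}{\beta_i}\log(B_i)+1\bigr)$, the number of boxes. That factor is not absorbed by $\min(B_i)^{-\delta}$ when the $B_i$ are unbalanced, e.g.\ $\min(B_i)$ fixed and $\max(B_i)\to\infty$.

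Your argument avoids this in three ways:
\begin{itemize}
\item Rewriting $\mathrm{D}_{n,B}=\mathrm{D}_1+u_{B'}$ makes the uniformity of the implied constant transparent, since the polyhedron is always $\mathrm{D}_1$.
\item Each error stays relative to its main term, so it is weighted by the summable factor $\nu(\mathrm{D}_n)$.
\item The split at half the cutoff isolates the boxes near the origin, where Theorem~\ref{theorem_multi_height_simplicial_cone} gives no saving, and controls them by the geometric decay instead.
\end{itemize}
The result is a genuine power saving in $\min(B_i)$ with no logarithmic loss, which is what the hypothesis of Theorem~\ref{theorem_hyperbola_method} requires downstream. Your route is the more careful version of the paper's argument, at the cost of one extra case split.

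Two minor remarks.
\begin{itemize}
\item The identity in the proof of Lemma~\ref{lemma_majoration_reste_serie} is printed with $n_i$ instead of $n_i-1$. Since $\mathrm{D}_n=\mathrm{D}_1-(n-1)\cdot\beta$, the correct form is $\nu(\mathrm{D}_n)=\nu(\mathrm{D}_1)\prod_i e^{-(n_i-1)\beta_i\langle\w_X^{-1},L_i^*\rangle}$. This is exactly what your Step~1 uses.
\item Your closing reduction to integer $B$ is unnecessary. The monotonicity remark concerns $\sharp T(\Q)_{h\in\Lambda\cap(-\Lambda)_B}$, not this sum over translated boxes. Your argument already works verbatim for real $B_i\geqslant 1$, since $B'_i\geqslant 1$ throughout the truncated range.
\end{itemize}
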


\begin{proof}
Let us write $$\mathcal{D}_{n,B} := \{ y \in \stackT(\R) \mid h_{\stackT,\infty}(y) \in \mathrm{D}_{n,B} \} .$$We have that $\mathcal{D}_{n,B} = e^{n \cdot \beta} . \mathcal{D}_{1,B}$. Hence, by theorem \ref{theorem_multi_height_simplicial_cone}, we have that there exists $K > 0$ and $\Delta > 0$ such that:
$$\left| \sharp ( T(\Q) )_{h \in \mathrm{D}_{n,B}} - \nu(\mathrm{D}_n) \, \tau(X) \, \prod\limits_{i = 1}^{\rho} B_i^{\langle \w_X^{-1},L_i^* \rangle} \right| \leqslant K \cdot \frac{\prod\limits_{i = 1}^{\rho} B_i^{\langle \w_{X}^{-1},L_i^* \rangle}}{\min(B_i)^{\Delta}}.$$
    We have:
    \begin{align*}
        &\left| \sum_{n_1 = 1}^{\frac{1}{\beta_1}\log(B_1) + 1}
\cdots
\sum_{n_{\rho} = 1}^{\frac{1}{\beta_{\rho}}\log(B_{\rho}) + 1} \sharp T(\Q)_{h \in \mathrm{D}_{n,B}} - \nu(-\Lambda) \, \tau(X) \, \prod\limits_{i = 1}^{\rho} B_i^{\langle \w_X^{-1},L_i^* \rangle} \right| \\
        &\leqslant \sum_{n_1 = 1}^{\frac{1}{\beta_1}\log(B_1) + 1}
\cdots
\sum_{n_{\rho} = 1}^{\frac{1}{\beta_{\rho}}\log(B_{\rho}) + 1} \left| \sharp ( T(\Q) )_{h \in \mathrm{D}_{n,B}} - \nu(\mathrm{D}_n) \, \tau(X) \, \prod\limits_{i = 1}^{\rho} B_i^{\langle \w_X^{-1},L_i^* \rangle} \right| \\
        &+ \, \tau(X) \, \prod\limits_{i = 1}^{\rho} B_i^{\langle \w_X^{-1},L_i^* \rangle} \cdot \sum_{\substack{n_1,\dots,n_\rho \ge 1\\ \exists i\in\{1,\dots,\rho\}:\,
n_i \ge \frac{1}{\beta_i}\log(B_i)+1}} \nu(\mathrm{D}_n)
    \end{align*}
    Hence applying theorem \ref{theorem_multi_height_simplicial_cone} and lemma \ref{lemma_majoration_reste_serie} we can deduce the theorem.
\end{proof}

\subsubsection{An upper bound}

Recall that we want to estimate the cardinality of
\[
T(\Q)_{h \in \Lambda \cap (-\Lambda)_B}
= \left\{ P \in T(\Q) \mid \forall i,\; 1 \leqslant H_{L_i}(P) \leqslant B_i \right\}.
\]
We aim to deduce this estimate from the one obtained in Proposition \ref{proposition_multi_height_not_bounded_below}. To this end, we give an upper bound, for a fixed \(k \in \{1,\ldots,\rho\}\), for the cardinality of the set
\[
\mathcal{C}_{k,n,B}
= \{ P \in T(\Q) \mid h(P) \in \mathrm{D}_{n,B} \text{ and } h(P)(L_k) \leqslant 0 \}.
\]

\begin{prop}\label{proposition_majoration_cardinal_boite_compact}
Let \(k \in \{1,\ldots,\rho\}\). Let $n \in (\N^*)^{\rho}$ with $n_i \leqslant \frac{1}{\beta_i}\log(B_i)+1$ for all $i$. Then there exist $\Delta > 0$ and a constant \(C > 0\), depending only on a fixed compact set (which depends only on $\beta$), such that
\[
\sharp \mathcal{C}_{k,n,B}
\leqslant
C \cdot \prod_{i \neq k} B_i^{\langle \w_{X}^{-1}, L_i^{*} \rangle}
\cdot e^{- \sum\limits_{i \neq k} n_i \cdot \beta(L_i) \cdot \w_{X}^{-1}(L_i^*) }.
\]
\end{prop}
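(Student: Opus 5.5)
The plan is to reduce the bound on $\sharp \mathcal{C}_{k,n,B}$ to the multi-height estimate of Theorem \ref{theorem_multi_height_simplicial_cone}, applied to a translate of a fixed compact box in which the $k$-th coordinate no longer grows with $B$. The key observation concerns the coordinates of a point $P \in \mathcal{C}_{k,n,B}$. We have $h(P) \in \mathrm{D}_{n,B}$, so for $i \neq k$
\[
\log(B_i) - n_i\beta_i \leqslant h(P)(L_i) \leqslant \log(B_i) - (n_i-1)\beta_i ,
\]
while the $k$-th coordinate satisfies $\log(B_k) - n_k\beta_k \leqslant h(P)(L_k) \leqslant 0$. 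The hypothesis $n_k \leqslant \frac{1}{\beta_k}\log(B_k) + 1$ gives $\log(B_k) - n_k \beta_k \geqslant -\beta_k$. Hence $h(P)(L_k) \in [-\beta_k, 0]$, an interval of length $\beta_k$ independent of $n$ and $B$.

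Consider the box $\mathrm{D}(a,b)$ with $a_i = -\beta_i$, $b_i = 0$ for $i \neq k$, and $a_k = -\beta_k$, $b_k = 0$; this is just $\mathrm{D}_{\mathbf 1}$. Define $B' \in (\R_{\geqslant 1})^{\rho}$ by $B'_k = 1$ and $\log(B'_i) = \log(B_i) - (n_i-1)\beta_i$ for $i\neq k$. We need $B'_i \geqslant 1$, and this follows from $n_i \leqslant \frac{1}{\beta_i}\log(B_i)+1$. Then $\mathcal{C}_{k,n,B} \subset T(\Q)_{h \in \mathrm{D}(a,b)_{B'}}$.

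Theorem \ref{theorem_multi_height_simplicial_cone} then gives
\[
\sharp \mathcal{C}_{k,n,B} \leqslant \nu(\mathrm{D}_{\mathbf 1})\,\tau(X)\prod_{i} B_i'^{\langle \w_X^{-1},L_i^*\rangle}\bigl(1+O(\min_i(B'_i)^{-\Delta})\bigr).
\]
The error factor is $O(1)$ because $\min B'_i \geqslant 1$. The implied constant depends only on $\sharp\Sigma(1)$, the $L_i$, and the diameter of $\mathcal{D}(a,b)_1$, and that diameter depends only on $\beta$. For the main factor, $B_k'^{\langle \w_X^{-1},L_k^*\rangle} = 1$, and for $i\neq k$,
\[
B_i'^{\langle \w_X^{-1},L_i^*\rangle} = B_i^{\langle \w_X^{-1},L_i^*\rangle} e^{-n_i\beta_i \w_X^{-1}(L_i^*)} e^{\beta_i \w_X^{-1}(L_i^*)}.
\]
The last factor is a constant depending only on $\beta$, so it is absorbed into $C$. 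This yields exactly the stated bound.

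The main obstacle is the uniformity in Theorem \ref{theorem_multi_height_simplicial_cone} when some $B'_i$ equal $1$ or stay bounded, so that $\min B'_i$ does not tend to infinity. The theorem as stated gives $1 + O(\min(B'_i)^{-\Delta})$ with a uniform implied constant for all $B' \in (\R_{\geqslant 1})^{\rho}$. I would check that the Davenport-type argument indeed holds uniformly in that range, i.e. that the error is bounded by a constant times the main term even when $B'$ is small. Should that fail, the fallback is to enlarge the compact set slightly, for instance by shifting $u_{B'}$ so that the translated box is still contained in a fixed compact neighbourhood, and to use the crude Davenport bound (volume plus lower-dimensional projections) directly. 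This fallback gives $\ll$ the volume term, up to a constant depending only on the compact set, i.e. only on $\beta$.
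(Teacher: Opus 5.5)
Your argument is correct, but it takes a different route from the paper. Your containment $\mathcal{C}_{k,n,B}\subset T(\Q)_{h\in \mathrm{D}(a,b)_{B'}}$ with $B'_k=1$ is right, and so are the check $B'_i\geq 1$ and the bookkeeping of the main term.

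\textbf{How the paper argues.} It does not invoke Theorem \ref{theorem_multi_height_simplicial_cone} here.
\begin{enumerate}
\item It lifts $\mathcal{C}_{k,n,B}$ to integral points of $\stackT$ with all coordinates nonzero.
\item It encloses $h_{\stackT,\infty}^{-1}(\mathrm{D})$ in a cube $[-N,N]^{\Sigma(1)}$. The translated region then lies in $\prod_{\lambda}\bigl([-Ne^{\langle [D_\lambda],v^k_B\rangle},Ne^{\langle [D_\lambda],v^k_B\rangle}]\setminus\{0\}\bigr)$.
\item It counts trivially. For every $M>0$, the interval $[-M,M]$ contains at most $2M$ nonzero integers. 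Combined with $\sum_\lambda [D_\lambda]=\w_X^{-1}$, this gives $\sharp\mathcal{C}_{k,n,B}\ll e^{\langle \w_X^{-1},v^k_B\rangle}$.
\end{enumerate}
This needs no Davenport, no M\"obius inversion, and no lower bound on the side lengths. In particular it handles the paper's translate $v^k_B$, whose coordinates $\log(B_i)-n_i\beta_i$ can be negative.

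\textbf{What your route relies on.} Your route is shorter on the page, but it puts all the weight on the uniformity of Theorem \ref{theorem_multi_height_simplicial_cone} at $\min_i B'_i=1$. In that regime its error term is as large as its main term. So what you really use is an upper bound of exactly the kind being proved, inherited from a theorem whose proof is only sketched.

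\textbf{Your fallback.} You flag this yourself, and the fallback does work, but you should state why. Because you shifted by one extra $\beta_i$, you have $u_{B'}\in\Lambda\subset\cone^{\vee}$. Hence $\langle [D_\lambda],u_{B'}\rangle\geq 0$ for every $\lambda$, so every torsor coordinate is dilated by a factor $\geq 1$. That is what makes the lower-dimensional projection terms in Davenport's bound $\ll$ the volume. The paper's nonzero-coordinate observation makes even this unnecessary.
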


\begin{proof}
By lifting to the universal torsor (respectively, to the extended universal torsor), and using \cite[Theorem 3.16]{bongiorno2024multiheightanalysisrationalpoints} for toric varieties and \cite[Theorem 6.23]{bongiorno_toric_stacks} for toric stacks, we obtain that \(\sharp \mathcal{C}_{k,n,B}\) is bounded above by the cardinality of the set of
\[
y \in \stackT(\Z) \cap T_{\Sigma(1)}(\R)
\]
satisfying, for all \(i \neq k\),
\[
- n \beta_i + \log(B_i)
\leqslant h_{\stackT,\infty}(y)(L_i)
\leqslant -(n-1) \beta_i + \log(B_i),
\]
and
\[
-\beta_k \leqslant h_{\stackT,\infty}(y)(L_k) \leqslant 0,
\]because $n_k \leqslant \frac{1}{\beta_k}\log(B_k)+1$. Recall that we write $T_{\Sigma(1)} = \G_m^{\Sigma(1)} \subset \stackT$. Let \(\mathrm{D}\) denote the compact subset of \(\Pic(X)^{\vee}_{\R}\) defined by the inequalities
\[
0 \leqslant a(L_i) \leqslant \beta_i \quad \text{for all } i \neq k,
\qquad
- \beta_k \leqslant a(L_k) \leqslant 0.
\]
For \(B = (B_1,\ldots,B_{\rho}) \in (\R_{\geqslant 1})^{\rho}\), we define a vector
\(v_B^k \in \Pic(X)^{\vee}_{\R}\) by
\[
v_B^k(L_i) = - n \cdot \beta(L_i) + \log(B_i)
\quad \text{for all } i \neq k,
\qquad
v_B^k(L_k) = 0 .
\]
We set \(\mathrm{D}_B = \mathrm{D} + v_B^k\).
We now seek to bound
\[
\sharp \left( \stackT(\Z) \cap T_{\Sigma(1)}(\R) \right)_{h_{\stackT,\infty} \in \mathrm{D}_B}.
\]

First, observe that \(h_{\stackT,\infty}^{-1}(\mathrm{D})\) is compact by
\cite[Lemma 4.3]{bongiorno2024multiheightanalysisrationalpoints} and
\cite[Theorem 6.29]{bongiorno_toric_stacks}. Hence there exists \(N > 0\) such that
\[
h_{\stackT,\infty}^{-1}(\mathrm{D})
\subset \prod_{\lambda \in \Sigma(1)} [-N,N].
\]
Since
\[
\left( \stackT(\Z) \cap T_{\Sigma(1)}(\R) \right)_{h_{\stackT,\infty} \in \mathrm{D}_B}
\subset
h_{\stackT,\infty}^{-1}(\mathrm{D}_B) \cap T_{\Sigma(1)}(\R)
=
e^{-v_B^k} \cdot \left( h_{\stackT,\infty}^{-1}(\mathrm{D}) \cap T_{\Sigma(1)}(\R) \right),
\]
we obtain
\[
h_{\stackT,\infty}^{-1}(\mathrm{D}_B) \cap T_{\Sigma(1)}(\R)
\subset
\prod_{\lambda \in \Sigma(1)}
\left(
\left[
- N \cdot e^{\langle [D_{\lambda}], v_B^k \rangle},
\,
N \cdot e^{\langle [D_{\lambda}], v_B^k \rangle}
\right]
\setminus \{0\}
\right).
\]
Using the relation
\[
\sum_{\lambda \in \Sigma(1)} [D_{\lambda}] = \w_{X}^{-1},
\]
we deduce the inequality
\[
\sharp \mathcal{C}_{k,n,B} \ll e^{\langle \w_{X}^{-1}, v_B^k \rangle}.
\]
Finally, observing that
\[
e^{\langle \w_{X}^{-1}, v_B^k \rangle}
=
\prod_{i \neq k } B_i^{\langle \w_{X}^{-1}, L_i^* \rangle}
\cdot
e^{- \sum\limits_{i \neq k} n_i \cdot \beta(L_i) \cdot \w_{X}^{-1}( L_i^*) },
\]
we obtain the announced bound.
\end{proof}

We can now give an upper bound for the difference $\sharp T(\Q)_{\sharp \mathrm{D}_{n,B}} - \sharp T(\Q)_{h \in \Lambda \cap \mathrm{D}_{n,B}}$.

\begin{prop}\label{proposition_majoration_boite_non_compact}
Let \(B \in \R_{\geqslant 1}^{\rho}\).
There exist constants \(C > 0\) and \(\Delta > 0\) such that
\[
\left|
\sum_{n_1 = 1}^{\frac{1}{\beta_1}\log(B_1) + 1}
\cdots
\sum_{n_{\rho} = 1}^{\frac{1}{\beta_{\rho}}\log(B_{\rho}) + 1}
\left(
\sharp T(\Q)_{h \in \mathrm{D}_{n,B}}
-
\sharp T(\Q)_{h \in \Lambda \cap \mathrm{D}_{n,B}}
\right)
\right|
\leqslant
C \cdot
\frac{\prod_{i = 1}^{\rho}
B_i^{\langle \w_{X}^{-1}, L_i^{*} \rangle}}
{\min\limits_{1 \leqslant i \leqslant \rho}(B_i)^{\Delta}}.
\]
\end{prop}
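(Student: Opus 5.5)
The difference $\sharp T(\Q)_{h \in \mathrm{D}_{n,B}} - \sharp T(\Q)_{h \in \Lambda \cap \mathrm{D}_{n,B}}$ is nonnegative. It counts the points $P$ with $h(P) \in \mathrm{D}_{n,B}$ but $h(P) \notin \Lambda$. Since $\Lambda = \{a \mid a(L_i) \geqslant 0 \ \forall i\}$, such a point satisfies $h(P)(L_k) < 0$ for at least one index $k$, so it lies in $\mathcal{C}_{k,n,B}$. The plan is therefore to use the union bound
\[
0 \leqslant \sharp T(\Q)_{h \in \mathrm{D}_{n,B}} - \sharp T(\Q)_{h \in \Lambda \cap \mathrm{D}_{n,B}} \leqslant \sum_{k=1}^{\rho} \sharp \mathcal{C}_{k,n,B},
\]
and then to sum over $n$ in the box $1 \leqslant n_i \leqslant \frac{1}{\beta_i}\log(B_i)+1$.

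For each fixed $k$, Proposition \ref{proposition_majoration_cardinal_boite_compact} gives a bound on $\sharp \mathcal{C}_{k,n,B}$ that does not depend on $n_k$, with constant $C$ uniform in $n$ and $B$ because it only depends on the fixed compact set determined by $\beta$. Summing over $n_i$ for $i \neq k$, each factor
\[
\sum_{n_i \geqslant 1} e^{-n_i \beta_i \w_X^{-1}(L_i^*)}
\]
is a convergent geometric series, since $\beta_i > 0$ and $\w_X^{-1}(L_i^*) > 0$, exactly as in Lemma \ref{lemma_majoration_reste_serie}. The sum over $n_k$ runs over at most $\frac{1}{\beta_k}\log(B_k)+1 \ll \log(B_k)+1$ values. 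So the contribution of index $k$ is
\[
\ll \prod_{i \neq k} B_i^{\langle \w_X^{-1}, L_i^*\rangle} \cdot (1+\log B_k) = \prod_{i=1}^{\rho} B_i^{\langle \w_X^{-1}, L_i^*\rangle} \cdot (1+\log B_k)\, B_k^{-\langle \w_X^{-1}, L_k^*\rangle}.
\]

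Set $d = \min_i \w_X^{-1}(L_i^*) > 0$, which is positive because $\w_X^{-1}$ lies in the interior of $\Lambda^\vee$. Then
\[
(1+\log B_k)\, B_k^{-\langle \w_X^{-1},L_k^*\rangle} \ll_{\varepsilon} B_k^{-d+\varepsilon} \leqslant \min_i (B_i)^{-(d-\varepsilon)}.
\]
Summing over $k$ and taking, say, $\Delta = d/2$ gives the claimed bound.

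The main point to check is uniformity: the constant in Proposition \ref{proposition_majoration_cardinal_boite_compact} must be independent of $n$ and $B$, and the logarithmic loss from the unconstrained $n_k$-sum must be absorbed. The first holds because the compact set $\mathrm{D}$ depends only on $\beta$ and $k$, and there are finitely many $k$. The second holds because $\w_X^{-1}(L_k^*) > 0$ strictly.

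The restriction $n_k \leqslant \frac{1}{\beta_k}\log(B_k)+1$ is what makes the condition $h(P)(L_k) \leqslant 0$ compatible with $h(P)(L_k) \geqslant -\beta_k$, which is what Proposition \ref{proposition_majoration_cardinal_boite_compact} uses. So the summation range is exactly the one in the statement. Finally, one passes from $B \in (\N^*)^\rho$ to real $B$ by monotonicity, as done earlier in the section.
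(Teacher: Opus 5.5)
Your proposal is correct and follows the paper's proof. The paper also uses the inclusion of $T(\Q)_{h \in \mathrm{D}_{n,B}} \setminus T(\Q)_{h \in \Lambda \cap \mathrm{D}_{n,B}}$ in $\bigcup_k \mathcal{C}_{k,n,B}$ together with Proposition~\ref{proposition_majoration_cardinal_boite_compact}; it leaves implicit the summation over $n$ that you carry out: the geometric series for $i \neq k$, and absorbing the $\log B_k$ factor into $B_k^{-\langle \w_X^{-1}, L_k^*\rangle}$.

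Two minor remarks. Since $\mathcal{C}_{k,n,B}$ is empty unless $n_k \geqslant \frac{1}{\beta_k}\log(B_k)$, only $O(1)$ values of $n_k$ actually contribute, so there is no logarithmic loss at all. Your final reduction to integer $B$ is unnecessary, because the union-bound argument already works directly for real $B$.
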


\begin{proof}
Since we have the inclusion
\[
T(\Q)_{h \in \mathrm{D}_{n,B}}
\setminus
T(\Q)_{h \in \Lambda \cap \mathrm{D}_{n,B}}
\subset
\bigcup_{1 \leqslant k \leqslant \rho} \mathcal{C}_{k,n,B},
\]
the result follows from Proposition \ref{proposition_majoration_cardinal_boite_compact}.
\end{proof}

We deduce the following result on the asymptotic behaviour of the number of rational points whose multi-height lies in a simplicial subcone of \(\cone^{\vee}\) and is bounded.

\begin{theorem}\label{theorem_multi_height_value_in_subcone}
Let \(B = (B_1,\ldots,B_{\rho}) \in (\R_{\geqslant 1})^{\rho}\).
Then there exist constants \(C > 0\) and \(\delta > 0\) such that
\[
\left|
\sharp ( T(\Q) )_{h \in \Lambda \cap (-\Lambda)_{B} }
-
\nu(-\Lambda)\, \tau(X)\,
\prod_{i = 1}^{\rho}
B_i^{\langle \w_{X}^{-1}, L_i^* \rangle}
\right|
\leqslant
C \cdot
\frac{\prod_{i = 1}^{\rho}
B_i^{\langle \w_{X}^{-1}, L_i^* \rangle}}
{\min(B_i)^{\delta}}.
\]
\end{theorem}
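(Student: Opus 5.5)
We prove Theorem \ref{theorem_multi_height_value_in_subcone} by combining Proposition \ref{proposition_multi_height_not_bounded_below} and Proposition \ref{proposition_majoration_boite_non_compact} through the triangle inequality. First reduce to $B \in (\N^*)^{\rho}$ using the sandwich $\lfloor B\rfloor \le B \le \lceil B\rceil$ noted earlier. The passage back to real $B$ changes $\prod_i B_i^{\langle \w_X^{-1},L_i^*\rangle}$ by a factor $1+O(\min(B_i)^{-1})$. This is absorbed into the error term after shrinking $\delta$ to at most $1$.

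For integral $B$, the choice of $\beta$ from the first lemma of this section gives the disjoint decomposition
\[
\sharp T(\Q)_{h \in \Lambda \cap (-\Lambda)_B} = \sum_{n \in (\N^*)^{\rho}} \sharp T(\Q)_{h \in \Lambda \cap \mathrm{D}_{n,B}}.
\]
By Lemma \ref{lemma_n_assez_grand_ensemble_vide}, only the $n$ with $n_i \leqslant \frac{1}{\beta_i}\log(B_i)+1$ for every $i$ contribute, so the sum is over the same finite box as in the two propositions.

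Next write, for this truncated sum,
\[
\sum_n \sharp T(\Q)_{h\in\Lambda\cap \mathrm{D}_{n,B}} - \nu(-\Lambda)\tau(X)\prod_i B_i^{\langle \w_X^{-1},L_i^*\rangle}
= \Bigl(\sum_n \sharp T(\Q)_{h\in \mathrm{D}_{n,B}} - \nu(-\Lambda)\tau(X)\prod_i B_i^{\langle \w_X^{-1},L_i^*\rangle}\Bigr) - \sum_n \bigl(\sharp T(\Q)_{h\in \mathrm{D}_{n,B}}-\sharp T(\Q)_{h\in\Lambda\cap \mathrm{D}_{n,B}}\bigr).
\]
Proposition \ref{proposition_multi_height_not_bounded_below} bounds the first bracket by $K\prod_i B_i^{\langle \w_X^{-1},L_i^*\rangle}\min(B_i)^{-\delta}$. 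Proposition \ref{proposition_majoration_boite_non_compact} bounds the second by $C\prod_i B_i^{\langle \w_X^{-1},L_i^*\rangle}\min(B_i)^{-\Delta}$. Taking $\delta' = \min(\delta,\Delta,1)$ and summing the constants gives the statement.

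The main obstacle is the second step, namely Proposition \ref{proposition_majoration_boite_non_compact}. Proposition \ref{proposition_majoration_cardinal_boite_compact} bounds each $\mathcal{C}_{k,n,B}$ by $\prod_{i\ne k}B_i^{\langle \w_X^{-1},L_i^*\rangle}e^{-\sum_{i\ne k} n_i\beta_i\w_X^{-1}(L_i^*)}$, and this bound must be summed over $n$. Summing over $n_i$ for $i\neq k$ gives a convergent geometric series. The sum over $n_k$ has $O(\log B_k)$ terms. Since all cross-terms are bounded by this, the $k$-th contribution is
\[
\ll \log(B_k)\prod_{i\ne k}B_i^{\langle \w_X^{-1},L_i^*\rangle} = \prod_i B_i^{\langle \w_X^{-1},L_i^*\rangle}\cdot \log(B_k)\, B_k^{-\langle \w_X^{-1},L_k^*\rangle}.
\]
This step needs $\langle \w_X^{-1},L_k^*\rangle>0$, which holds because $\w_X^{-1}$ lies in the interior of the effective cone, contained in $(\Lambda^\vee)^\circ$. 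Hence the contribution is $\ll \prod_i B_i^{\langle \w_X^{-1},L_i^*\rangle}\min(B_i)^{-d/2}$, with $d$ as in Lemma \ref{lemma_majoration_reste_serie}. The bound is uniform in $n$ because the compact set in Proposition \ref{proposition_majoration_cardinal_boite_compact} depends only on $\beta$, so the implied constants are uniform. If this is written out explicitly, Proposition \ref{proposition_majoration_boite_non_compact} follows, and the theorem follows with it.
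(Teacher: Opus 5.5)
Your proposal is correct and follows the paper's proof. The paper likewise reduces to integral $B$, uses the choice of $\beta$ and Lemma \ref{lemma_n_assez_grand_ensemble_vide} to write the count as the truncated sum over $n$, and combines Propositions \ref{proposition_multi_height_not_bounded_below} and \ref{proposition_majoration_boite_non_compact} by the triangle inequality. Your extra sketch also correctly fills in the step the paper leaves implicit in Proposition \ref{proposition_majoration_boite_non_compact}: the bound of Proposition \ref{proposition_majoration_cardinal_boite_compact} sums as a geometric series over $n_i$ for $i\neq k$, has $O(\log B_k)$ terms in $n_k$, and the resulting $\log B_k$ is absorbed by $B_k^{-\langle \w_X^{-1},L_k^*\rangle}$.
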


\begin{proof}
Since by our choice of $\beta$ and by Lemma \ref{lemma_n_assez_grand_ensemble_vide}, we have the equality for \\ $B \in (\N^*)^{\rho}$:
\[
T(\Q)_{h \in \Lambda \cap (-\Lambda)_B}
=
\sum_{n_1 = 1}^{\frac{1}{\beta_1}\log(B_1) + 1}
\cdots
\sum_{n_{\rho} = 1}^{\frac{1}{\beta_{\rho}}\log(B_{\rho}) + 1}
\sharp T(\Q)_{h \in \Lambda \cap \mathrm{D}_{n,B}}
\]
the result follows from
Propositions \ref{proposition_multi_height_not_bounded_below}
and \ref{proposition_majoration_boite_non_compact}, and the fact that it is enough to prove it for $B \in (\N^*)^{\rho}$ as we have seen before.
\end{proof}

\subsection{Multi-height estimates for the application of the hyperbola method}

Recall that we want to study the arithmetic functions $f_{\downarrow}$ and $f_{\uparrow}$ from $ \N^{\rho}$ to $ \R_+ $ defined by:
\begin{itemize}
    \item $f_{\uparrow}(y) = \sharp \{ P \in T(\Q) \mid h(P) \in \Lambda \text{ and } \lfloor H_{L_i}(P) \rfloor = y_i \ \forall i \in \{1,..,\rho\} \}$
    \item $f_{\downarrow}(y) = \sharp \{ P \in T(\Q) \mid h(P) \in \Lambda \text{ and } \lceil H_{L_i}(P) \rceil = y_i \ \forall i \in \{1,..,\rho\} \} .$ 
\end{itemize}We want to show that they satisfy the hypotheses of Theorem \ref{theorem_hyperbola_method} in order to apply the hyperbola method, that is, we want to estimate:
\begin{itemize}
    \item $\sum\limits_{1 \leqslant y_i \leqslant B_i,\ 1 \leqslant i \leqslant \rho} f_{\uparrow}(y) = \sharp \left\{ P \in T(\Q) \mid \forall i,\; 1 \leqslant \lceil H_{L_i}(P) \rceil \leqslant B_i \right\} ;$
    \item $\sum\limits_{1 \leqslant y_i \leqslant B_i,\ 1 \leqslant i \leqslant \rho} f_{\downarrow}(y) = \sharp \left\{ P \in T(\Q) \mid \forall i,\; 1 \leqslant \lfloor H_{L_i}(P) \rfloor \leqslant B_i \right\} .$
\end{itemize}

We have the following proposition:

\begin{prop}\label{proposition_first_hypothesis_arithmetic_function}
    Let $B_1, \dots, B_{\rho} \in \R_{\geqslant 1}$, with sufficiently large $\min\limits_{1 \leqslant i \leqslant {\rho}}(B_i)$ , we have the estimates:
\begin{itemize}
    \item $\sum\limits_{1 \leqslant y_i \leqslant B_i,\ 1 \leqslant i \leqslant \rho} f_{\uparrow}(y) 
= \nu(-\Lambda) \cdot \tau(X) \cdot \prod\limits_{i = 1}^{\rho} B_i^{\langle \w_X^{-1},L_i^* \rangle} 
+ O \!\left( \prod\limits_{i = 1}^{\rho} B_i^{\langle \w_X^{-1},L_i^* \rangle} \cdot
\min\limits_{1 \leqslant i \leqslant {\rho}}(B_i)^{-\Delta} \right) ;$ 
    \item $\sum\limits_{1 \leqslant y_i \leqslant B_i,\ 1 \leqslant i \leqslant \rho} f_{\downarrow}(y) 
= \nu(-\Lambda) \cdot \tau(X) \cdot \prod\limits_{i = 1}^{\rho} B_i^{\langle \w_X^{-1},L_i^* \rangle} 
+ O \!\left( \prod\limits_{i = 1}^{\rho} B_i^{\langle \w_X^{-1},L_i^* \rangle} \cdot
\min\limits_{1 \leqslant i \leqslant {\rho}}(B_i)^{-\Delta} \right) $.
\end{itemize}
\end{prop}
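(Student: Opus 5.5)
The strategy is to sandwich each of the two partial sums between counts of the form $\sharp T(\Q)_{h \in \Lambda \cap (-\Lambda)_{B'}}$ for slightly perturbed vectors $B'$. Theorem \ref{theorem_multi_height_value_in_subcone} then applies directly.

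\textbf{Step 1 (the sum of $f_{\uparrow}$).} Since $h(P)\in\Lambda$ means $H_{L_i}(P)\geqslant 1$ for all $i$, the condition $1\leqslant \lceil H_{L_i}(P)\rceil\leqslant B_i$ is equivalent to $1\leqslant H_{L_i}(P)\leqslant \lfloor B_i\rfloor$. The sum of $f_{\uparrow}$ over the box is therefore exactly $\sharp T(\Q)_{h \in \Lambda \cap (-\Lambda)_{\lfloor B\rfloor}}$.

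\textbf{Step 2 (the sum of $f_{\downarrow}$).} Here $1\leqslant \lfloor H_{L_i}(P)\rfloor\leqslant B_i$ is equivalent to $1\leqslant H_{L_i}(P)<\lfloor B_i\rfloor+1$. The count is therefore squeezed between $\sharp T(\Q)_{h\in\Lambda\cap(-\Lambda)_{\lfloor B\rfloor}}$ and $\sharp T(\Q)_{h\in\Lambda\cap(-\Lambda)_{\lfloor B\rfloor+1}}$. One should double check whether the set in \eqref{equation_definition_arithmetic_functions} intends $h(P)\in\Lambda$, so that $y_i\geqslant1$ automatically. Either way, the points with $\lfloor H_{L_i}\rfloor=0$ are excluded by the range $y_i\geqslant 1$, and the sandwich holds.

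\textbf{Step 3 (apply the theorem and compare main terms).} Applying Theorem \ref{theorem_multi_height_value_in_subcone} to $B'=\lfloor B\rfloor$ and $B'=\lfloor B\rfloor+1$ gives, for each, the main term $\nu(-\Lambda)\tau(X)\prod_i B_i'^{\langle\w_X^{-1},L_i^*\rangle}$ with error $O\bigl(\prod_i B_i'^{\langle\w_X^{-1},L_i^*\rangle}\min(B_i')^{-\delta}\bigr)$. It remains to replace $B'$ by $B$. Since $B_i'=B_i(1+O(1/B_i))$ and the exponents $\langle\w_X^{-1},L_i^*\rangle$ are positive constants, we get
\[
\prod_i B_i'^{\langle\w_X^{-1},L_i^*\rangle}=\prod_i B_i^{\langle\w_X^{-1},L_i^*\rangle}\Bigl(1+O\bigl(\min_i(B_i)^{-1}\bigr)\Bigr).
\]
Likewise $\min(B_i')\asymp\min(B_i)$ once $\min(B_i)\geqslant 2$, which is where the hypothesis that $\min(B_i)$ is sufficiently large is used. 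Both estimates then follow with $\Delta=\min(\delta,1)$.

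\textbf{Main obstacle.} The only delicate point is uniformity. The hyperbola method requires the implied constant to be independent of $B$, so I would check that the constant $C$ in Theorem \ref{theorem_multi_height_value_in_subcone} does not depend on $B$. Tracing the proof, this follows from Theorem \ref{theorem_multi_height_simplicial_cone}, where the constant depends only on $\sharp\Sigma(1)$, the $L_i$, and the diameter of the fixed set $\mathcal{D}(a,b)_1$. It also uses the fact that the translates $\mathcal{D}_{n,B}=e^{n\cdot\beta}\cdot\mathcal{D}_{1,B}$ come from a single fixed compact set.
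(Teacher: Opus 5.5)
Your proposal is correct and follows the paper's proof: you identify the $f_{\uparrow}$-sum with $\sharp T(\Q)_{h\in\Lambda\cap(-\Lambda)_{B'}}$ for an integer vector $B'$, sandwich the $f_{\downarrow}$-sum between the counts at $B'$ and $B'+\underline{1}$, and then invoke Theorem \ref{theorem_multi_height_value_in_subcone}. Your explicit comparison of $\prod_i \lfloor B_i\rfloor^{\langle \w_X^{-1},L_i^*\rangle}$ with $\prod_i B_i^{\langle \w_X^{-1},L_i^*\rangle}$, which gives $\Delta=\min(\delta,1)$, fills in a step the paper leaves implicit when it reduces to $B\in(\N^*)^{\rho}$.
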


\begin{proof}
First, using the properties of the ceiling function, we obtain for
\(B \in (\N^{*})^{\rho}\) the equality
\[
\sum_{1 \leqslant y_i \leqslant B_i,\ 1 \leqslant i \leqslant \rho}
f_{\uparrow}(y)
=
\sharp T(\Q)_{h \in \Lambda \cap (-\Lambda)_B }.
\]
In this case, the desired estimate follows directly from
Theorem~\ref{theorem_multi_height_value_in_subcone}.
We conclude by observing that we have the bounds
\[
\sum_{1 \leqslant y_i \leqslant \lfloor B_i \rfloor,\ 1 \leqslant i \leqslant \rho}
f_{\uparrow}(y)
\leqslant
\sum_{1 \leqslant y_i \leqslant B_i,\ 1 \leqslant i \leqslant \rho}
f_{\uparrow}(y)
\leqslant
\sum_{1 \leqslant y_i \leqslant \lceil B_i \rceil,\ 1 \leqslant i \leqslant \rho}
f_{\uparrow}(y).
\]

Similarly, in order to estimate
\[
\sum_{1 \leqslant y_i \leqslant B_i,\ 1 \leqslant i \leqslant \rho}
f_{\downarrow}(y),
\]
it suffices to treat the case where \(B_1, \dots, B_{\rho} \in \N^*\).
Moreover, we have the inequalities
\[
\sharp T(\Q)_{h \in\Lambda \cap (-\Lambda)_B}
\leqslant
\sum_{1 \leqslant y_i \leqslant B_i,\ 1 \leqslant i \leqslant \rho}
f_{\downarrow}(y)
\leqslant
\sharp T(\Q)_{h \in \Lambda \cap (-\Lambda)_{B+ \underline{1} }},
\]
where \(\underline{1}\) denotes here the element of \(\R^{\rho}\) whose coordinates in the
canonical basis are all equal to \(1\).
The estimate is therefore again a consequence of
Theorem~\ref{theorem_multi_height_value_in_subcone}.
\end{proof}


\section{The hyperbola method}

Recall that we have chosen $\Lambda$ to be a simplicial subcone of $\cone^{\vee}$ of maximal dimension, and that $\base = (L_1, \dots, L_\rho)$ is a basis of $\Pic(X)_{\R}$ such that $$\Lambda^{\vee} = \mathrm{cone}(L_1, \dots, L_{\rho}).$$We have seen in the previous section (see Proposition \ref{proposition_first_hypothesis_arithmetic_function}) that the arithmetic function $f: \N^{\rho} \rightarrow \R_+ $ where $f = f_{\uparrow}$ or $f = f_{\downarrow}$ (see equation \ref{equation_definition_arithmetic_functions}) satisfies the first hypothesis of theorem \ref{theorem_hyperbola_method}. That is, there exist strictly positive real numbers $C_{f,M} \leqslant C_{f,E}$, $\Delta > 0$, and $\w_i > 0$ for $1 \leqslant i \leqslant \rho$ such that, 
for $B_1, \dots, B_{\rho} \in \R_{\geqslant 1}$, we have
$$
\sum\limits_{1 \leqslant y_i \leqslant B_i,\ 1 \leqslant i \leqslant \rho} f(y) 
= C_{f,M} \prod\limits_{i = 1}^{\rho} B_i^{\w_i} 
+ O \!\left( C_{f,E} \prod\limits_{i = 1}^{\rho} B_i^{\w_i} 
\min\limits_{1 \leqslant i \leqslant {\rho}}(B_i)^{-\Delta} \right)
$$as $ \min(B_i) \rightarrow \infty$, where the implied constant is independent of $f$. 

In our case, for any $1 \leqslant i \leqslant \rho$, we have
\[
\omega_i = \langle \omega_X^{-1}, L_i^* \rangle > 0,
\]
since $\Lambda \subset \cone^{\vee}$ and $\omega_X^{-1} \in \cone^{\circ}$.
Moreover, we have $C_{f,M} = \nu(-\Lambda)\,\tau(X)$.
We aim to study asymptotically the cardinality of the set
\[
T(\Q)_{\,h(P)(\omega_X^{-1}) \leqslant \log(B)}.
\]
To apply the hyperbola method, we first restrict our attention to the case where
$\Lambda$ is a simplicial subcone of $\cone^{\vee}$, and consider
\[
T(\Q)_{\,h\in \Lambda \mid h(P)(\omega_X^{-1}) \leqslant \log(B)}.
\]We then have the inequalities:
\begin{equation}\label{equation_encadrement_arithmetic_function}
    \sum\limits_{\prod\limits_{i = 1}^{\rho} y_i^{\alpha_{i}} \leqslant B } f_{\downarrow}(y) \leqslant \sharp T(\Q)_{h\in \Lambda \mid h(P)(\w_{X}^{-1}) \leqslant \log(B)} \leqslant \sum\limits_{\prod\limits_{i = 1}^{\rho} y_i^{\alpha_{i}} \leqslant B } f_{\uparrow}(y)
\end{equation}where $\alpha_{i} = \langle \w_{X}^{-1}, L_i^* \rangle$ for every $1 \leqslant i \leqslant \rho$. Now in order to apply the hyperbola method from \cite{pieropan_hyperbola_method}, we need to verify two assumptions about a polyhedron $\mathcal{P}$ (see the statement in theorem \ref{theorem_hyperbola_method}). With our choice of $f$ and $\alpha_i$, the polyhedron $\mathcal{P}$ is simply defined by the conditions:
\begin{enumerate}
    \item $\sum\limits_{i = 1}^{\rho } t_i \leqslant 1$;
    \item $t_i \geqslant 0$ for any $1 \leqslant i \leqslant \rho$.
\end{enumerate}

\begin{prop}\label{proposition_assumptions_polyhedron}
    The polyhedron $\stackP$ is bounded and is not contained in a hyperplane. The face $F$  of $\mathcal{P}$ on which the function $t \mapsto \sum\limits_{i = 1}^{\rho } t_i$ attains its maximal value is not contained in a coordinate hyperplane. Moreover, we have that the maximal value is $1$ and $\dim(F) = \rho - 1$.
\end{prop}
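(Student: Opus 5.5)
The plan is to first confirm the explicit description of $\stackP$, and then check each claim directly; nothing beyond elementary convex geometry is needed. In Theorem \ref{theorem_hyperbola_method} there is a single constraint, so $\mathcal{K}$ has one element. We have $\alpha_{i}=\langle \w_X^{-1},L_i^*\rangle$ and $\w_i=\langle \w_X^{-1},L_i^*\rangle$, and both are strictly positive: $L_i^*\in\Lambda\subset\cone^{\vee}$ and $\w_X^{-1}\in\cone^{\circ}$. The coefficient $\alpha_i\w_i^{-1}$ is therefore equal to $1$. Hence $\stackP$ is the standard simplex
\[
\stackP=\Bigl\{t\in\R_+^{\rho}\ \Big|\ \sum_{i=1}^{\rho}t_i\leqslant 1\Bigr\}=\mathrm{conv}(0,e_1,\dots,e_\rho),
\]
where $(e_i)$ is the canonical basis of $\R^{\rho}$.

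Boundedness is immediate, since $0\leqslant t_i\leqslant\sum_j t_j\leqslant 1$ gives $\stackP\subset[0,1]^{\rho}$. To see that $\stackP$ is not contained in a hyperplane, note that it contains the $\rho+1$ affinely independent points $0,e_1,\dots,e_\rho$. Equivalently, it contains the open set $\{t_i>0,\ \sum t_i<1\}$, which is nonempty (it contains for instance $t=\frac{1}{2\rho}\underline{1}$).

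Next, the linear function $\ell(t)=\sum_i t_i$ is bounded by $1$ on $\stackP$ by definition, and it equals $1$ at $e_1$. So the maximal value is $a=1$, and the face where it is attained is
\[
F=\Bigl\{t\in\R_+^{\rho}\ \Big|\ \sum_i t_i=1\Bigr\}=\mathrm{conv}(e_1,\dots,e_\rho).
\]
The points $e_1,\dots,e_\rho$ are affinely independent, so $\dim F=\rho-1$. Finally, $F$ is not contained in any coordinate hyperplane $\{t_j=0\}$, because $e_j\in F$ has $j$-th coordinate equal to $1$; alternatively, the barycenter $\frac1\rho\underline{1}$ lies in $F$ and has all coordinates nonzero.

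The only step needing care is the reduction $\alpha_i\w_i^{-1}=1$. It relies on the positivity of $\langle \w_X^{-1},L_i^*\rangle$, which has already been justified earlier in the paper, and on the identification of the exponents $\w_i$ supplied by Proposition \ref{proposition_first_hypothesis_arithmetic_function} with the $\alpha_i$ appearing in \eqref{equation_encadrement_arithmetic_function}. Once that is in place, everything else is routine.
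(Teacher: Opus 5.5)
Your proposal is correct and follows essentially the same route as the paper: identify $\stackP$ as the standard simplex $\mathrm{conv}(0,e_1,\dots,e_\rho)$, then read off boundedness, the nonempty interior, and the fact that the maximal face is the $(\rho-1)$-simplex spanned by the canonical basis vectors. You are slightly more explicit than the paper, which does not spell out the reduction $\alpha_i\w_i^{-1}=1$ (it only asserts the description of $\stackP$ in the text before the proposition) nor argue that the maximal value is $a=1$.
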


\begin{proof}
The fact that $\stackP$ is bounded follows directly from its definition.
Moreover, the polyhedron has non-empty interior, and therefore it is not
contained in any hyperplane.
The face $F$ is naturally of dimension $\rho-1$, and the vectors of the
canonical basis are the vertices of the simplex defined by $F$.
In particular, $F$ is not contained in any coordinate hyperplane.
\end{proof}

Before applying the hyperbola method of Pieropan and Schindler,
it remains to compute the constant $c_{\stackP}$ appearing in
Theorem~\ref{theorem_hyperbola_method}. We now give its definition in our setting:

\begin{definition}\label{definition_constante_polytope_hyperbola_method}
    Let \(\stackP \subset \R^{\rho}\) be the polyhedron defined above.  For $\delta \in ]0,1[$, let $H_{\delta}$ be the affine hyperplane given by the affine equation $\sum\limits_{i = 1}^{\rho} t_i = 1 - \delta$, and let $F_{\delta}$ be the $(\rho - 1)$-dimensional polyhedron defined by 
    \[
        F_{\delta} = H_{\delta} \cap \stackP.
    \]
    We endow $H_{\delta}$ with its natural Lebesgue measure as an affine hyperplane, which we denote by $\Vol_{\rho - 1}$.  
    We then define
    \[
        c_{\stackP} = \lim_{\delta \to 0} \Vol_{\rho - 1}(F_{\delta}).
    \]
\end{definition}

\begin{remark}
    By Proposition \ref{proposition_assumptions_polyhedron}, we may apply \cite[Proposition~3.1]{pieropan_hyperbola_method}, hence the constant $c_{\stackP}$ is well defined, as in \cite[Equation~4.3]{pieropan_hyperbola_method}.
\end{remark}

We can now compute the constant $c_{\stackP}$.

\begin{prop}\label{proposition_computation_constant_polyhedron}
    We have the equality
    $$
    c_{\stackP} = \frac{1}{(\rho - 1)!} .
    $$ 
\end{prop}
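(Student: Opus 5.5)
The plan is to compute $\Vol_{\rho-1}(F_\delta)$ explicitly for every $\delta \in ]0,1[$ and then let $\delta \to 0$. By definition of $\stackP$,
\[
F_\delta = \Bigl\{ t \in \R^{\rho} \;\Big|\; t_i \geqslant 0 \ \forall i,\ \sum_{i=1}^{\rho} t_i = 1-\delta \Bigr\},
\]
which is the standard $(\rho-1)$-simplex scaled by the factor $1-\delta$. So everything reduces to the volume of a standard simplex.

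The one point needing care, and the step I expect to be the main obstacle, is the normalization of the measure $\Vol_{\rho-1}$ on $H_\delta$. The value $\frac{1}{(\rho-1)!}$ is not the induced Euclidean $(\rho-1)$-dimensional measure: that measure would introduce an extra factor $\sqrt{\rho}$. Instead, I will use the normalization of \cite[Equation~4.3]{pieropan_hyperbola_method}, referred to in the Remark above. There the Lebesgue measure on an affine hyperplane $\sum_i t_i = c$ is obtained by transporting the Lebesgue measure of $\R^{\rho-1}$ along the projection that forgets one coordinate, for instance
\[
(t_1,\dots,t_\rho) \mapsto (t_1,\dots,t_{\rho-1}).
\]
Equivalently, it is the measure for which the lattice $H_\delta \cap \Z^{\rho}$ (after translation) has covolume one. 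This is the normalization compatible with the main term of Theorem~\ref{theorem_hyperbola_method}, so I will first check against \cite{pieropan_hyperbola_method} that this is indeed their convention.

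With this convention, the projection identifies $F_\delta$ with
\[
\Bigl\{ (t_1,\dots,t_{\rho-1}) \in \R_{\geqslant 0}^{\rho-1} \;\Big|\; \sum_{i=1}^{\rho-1} t_i \leqslant 1-\delta \Bigr\} = (1-\delta)\,\Delta_{\rho-1},
\]
where $\Delta_{\rho-1}$ is the standard simplex in $\R^{\rho-1}$. The last coordinate is recovered as $t_\rho = 1-\delta-\sum_{i<\rho} t_i \geqslant 0$. The classical computation (by induction on the dimension, or via an iterated integral) gives $\Vol(\Delta_{\rho-1}) = \frac{1}{(\rho-1)!}$. By homogeneity of Lebesgue measure we then get
\[
\Vol_{\rho-1}(F_\delta) = \frac{(1-\delta)^{\rho-1}}{(\rho-1)!}.
\]
Letting $\delta \to 0$ yields $c_{\stackP} = \frac{1}{(\rho-1)!}$, as claimed.
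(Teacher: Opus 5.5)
Your proposal is correct and follows essentially the same route as the paper: $F_\delta$ is the simplex $F$ rescaled by $1-\delta$, so its volume is $(1-\delta)^{\rho-1}/(\rho-1)!$, and you let $\delta \to 0$. You also make the measure normalization explicit: the measure is transported from $\R^{\rho-1}$ by forgetting one coordinate, rather than the induced Euclidean measure, which would give $\sqrt{\rho}/(\rho-1)!$. The paper leaves this implicit in its phrase ``natural Lebesgue measure'', but it is exactly what is needed for the stated value, and it is the convention compatible with the main term in Theorem~\ref{theorem_hyperbola_method}.
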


\begin{proof}
It is enough to notice that for $\delta \in ]0,1[$, we have
$$
\Vol_{\rho - 1}(H_{\delta} \cap \stackP)
= (1-\delta)^{\rho-1} \, \Vol_{\rho-1}(F)
$$and then we apply the formula to compute the volume of the $(\rho-1)$-simplex $F$ in the canonical basis of $\R^{\rho}$. This completes the proof.  
\end{proof}

Let now $B \geqslant 1$.  
We set  
\[
    \mathrm{C}_B = \{ a \in \Pic(X)_{\R}^{\vee} \mid a(\w_{X}^{-1}) \leqslant \log(B) \}.
\]  
For a simplicial sub-cone $\Lambda \subset \cone^{\vee}$ of maximal dimension, we want to estimate the cardinality of  
\[
    T(\Q)_{h \in \mathrm{C}_B \cap \Lambda} = \{ P \in T(\Q) \mid h(P) \in \Lambda \text{ and } h(P)(\w_{X}^{-1}) \leqslant \log(B) \}.
\]  
We have the following theorem:

\begin{theorem}\label{theorem_hyperbola_method_simplicial_cone}
    For $B > 1$, we have the following asymptotic behaviour:
    \[
        \# T(\Q)_{h \in \mathrm{C}_B \cap \Lambda}
        \underset{B \to +\infty}{=}
        \frac{1}{(\rho - 1)!} \cdot \nu(-\Lambda) \cdot \tau(X) \cdot B \log(B)^{\rho - 1}
        + O\!\left( \log(\log(B))^{\rho} \log(B)^{\rho - 2} B \right).
    \]
\end{theorem}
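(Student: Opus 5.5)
The plan is to sandwich the count between two sums of arithmetic functions and apply Theorem \ref{theorem_hyperbola_method} to each. By \eqref{equation_encadrement_arithmetic_function},
\[
S_{f_{\downarrow}}(B) \leqslant \# T(\Q)_{h \in \mathrm{C}_B \cap \Lambda} \leqslant S_{f_{\uparrow}}(B),
\]
where $S_f(B) = \sum_{y \in \mathcal{D}(B,\alpha)} f(y)$. Here $\mathcal{K}$ is a singleton and $\alpha_{i} = \langle \w_X^{-1}, L_i^*\rangle$. So it suffices to show that both $S_{f_{\downarrow}}(B)$ and $S_{f_{\uparrow}}(B)$ have the asymptotic expansion claimed in the statement.

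Before doing so, I would double-check the sandwich itself. A point $P$ with $h(P) \in \Lambda$ satisfies
\[
h(P)(\w_X^{-1}) = \sum_i \alpha_i\, h(P)(L_i),
\]
since $\w_X^{-1} = \sum_i \langle \w_X^{-1}, L_i^*\rangle L_i$. Hence $\prod_i H_{L_i}(P)^{\alpha_i} \leqslant B$. Replacing each $H_{L_i}(P) \geqslant 1$ by its floor or its ceiling then gives the two inequalities. One must also check that the floor version has $y_i \geqslant 1$, which holds because $H_{L_i}(P) \geqslant 1$ on $\Lambda$. Points with $h(P)(L_i)$ exactly on the boundary can be handled by the usual non-strict conventions.

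Next, I would verify the hypotheses of Theorem \ref{theorem_hyperbola_method} for $f = f_{\uparrow}$ and $f = f_{\downarrow}$.
\begin{enumerate}
\item The first hypothesis is Proposition \ref{proposition_first_hypothesis_arithmetic_function}, with $\w_i = \alpha_i$, $C_{f,M} = \nu(-\Lambda)\tau(X)$, and $C_{f,E}$ a fixed constant at least $C_{f,M}$.
\item The proposition only gives the estimate for $\min(B_i)$ sufficiently large, whereas the theorem requires it for all $B_i \geqslant 1$. This is the step I expect to need care. For bounded $\min(B_i)$, the error term $\prod B_i^{\w_i}\min(B_i)^{-\Delta}$ is comparable to the main term, so it suffices to have a crude upper bound $\sum_{y_i\leqslant B_i} f(y) \ll \prod B_i^{\w_i}$ uniformly in $B$. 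This bound follows from Theorem \ref{theorem_multi_height_value_in_subcone} for large $\min(B_i)$. When some $B_j$ is small, I would use an upper bound of the kind in Proposition \ref{proposition_majoration_cardinal_boite_compact}: lifting to the torsor and using a box of size $e^{\langle [D_\lambda], u_B\rangle}$ gives $\ll \prod B_i^{\w_i}$ directly. Enlarging $C_{f,E}$ then absorbs everything.
\item The geometric hypotheses on $\stackP$ hold by Proposition \ref{proposition_assumptions_polyhedron}. Since $\w_i = \alpha_i$, $\stackP$ is the standard simplex, so $a = 1$ and $k = \dim F = \rho - 1$.
\end{enumerate}

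Finally, I would plug in. Theorem \ref{theorem_hyperbola_method} with $s = \rho$ gives
\[
S_f(B) = (\rho - 1 - k)!\, C_{f,M}\, c_{\stackP}\, \log(B)^{\rho-1} B + O\bigl(\log\log(B)^{\rho}\log(B)^{\rho-2}B\bigr).
\]
Here $(\rho-1-k)! = 0! = 1$, and $c_{\stackP} = 1/(\rho-1)!$ by Proposition \ref{proposition_computation_constant_polyhedron}. Since both bounding sums have the same main term $\frac{1}{(\rho-1)!}\nu(-\Lambda)\tau(X)B\log(B)^{\rho-1}$ and the same error term, the sandwich yields the theorem.
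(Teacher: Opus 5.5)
Your proposal is correct and is essentially the paper's proof: sandwich the count via \eqref{equation_encadrement_arithmetic_function}, check the hypotheses of Theorem~\ref{theorem_hyperbola_method} with Propositions~\ref{proposition_first_hypothesis_arithmetic_function} and~\ref{proposition_assumptions_polyhedron}, and insert $(\rho-1-k)!=1$ and $c_{\stackP}=1/(\rho-1)!$. Which of the two sums is the lower bound depends on which function uses the floor, and the paper's labelling of $f_{\downarrow}$ and $f_{\uparrow}$ is not consistent, but this is harmless since both sums have identical asymptotics; your extra treatment of the range where $\min_i B_i$ stays bounded, via the crude torsor-box bound $\ll\prod_i B_i^{\w_i}$ in the style of Proposition~\ref{proposition_majoration_cardinal_boite_compact}, is sound and fills a uniformity point the paper passes over.
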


\begin{proof}
    By Proposition~\ref{proposition_first_hypothesis_arithmetic_function}, the arithmetic functions $f_{\downarrow}$ and $f_{\uparrow}$ satisfy the first hypothesis of \cite[Theorem~1.1]{pieropan_hyperbola_method} and by Proposition  \ref{proposition_assumptions_polyhedron}, $\stackP$ satisfy the other assumptions of \cite[Theorem~1.1]{pieropan_hyperbola_method}, hence using the computation of Proposition \ref{proposition_computation_constant_polyhedron}, we get by applying \cite[Theorem~1.1]{pieropan_hyperbola_method} as $B \rightarrow + \infty$:
    $$\sum\limits_{\prod\limits_{i = 1}^{\rho} y_i^{\alpha_{i}} \leqslant B } f_{\downarrow}(y)  =
        \frac{1}{(\rho - 1)!} \cdot \nu(-\Lambda) \cdot \tau(X) \cdot B \log(B)^{\rho - 1}
        + O\!\left( \log(\log(B))^{\rho} \log(B)^{\rho - 2} B \right) $$and $$ \sum\limits_{\prod\limits_{i = 1}^{\rho} y_i^{\alpha_{i}} \leqslant B } f_{\uparrow}(y) =
        \frac{1}{(\rho - 1)!} \cdot \nu(-\Lambda) \cdot \tau(X) \cdot B \log(B)^{\rho - 1}
        + O\!\left( \log(\log(B))^{\rho} \log(B)^{\rho - 2} B \right).$$We conclude the proof using equation \ref{equation_encadrement_arithmetic_function}.
\end{proof}

\section{Conclusion}\label{section_conclusion}

For $B \geqslant 1$, let us write again $\mathrm{C}_B = \{ a \in \Pic(X)_{\R}^{\vee} \mid a(\w_{X}^{-1}) \leqslant \log(B) \}$. To deduce from the previous section the result on the asymptotic behaviour of
$\sharp T(\Q)_{h \in \mathrm{C}_B}$, we shall assume the following hypothesis:

\begin{hypothesis}\label{hypothesis_lower_bound_multi_height}
    For every $P \in T(\Q)$, we have $h(P) \in \cone^{\vee}$ .
\end{hypothesis}

\begin{remark}
    This hypothesis is verified by split, smooth and proper toric varieties (see Proposition \ref{proposition_lower_bound_toric_variety}) and by the weighted projective stacks (see Proposition \ref{proposition_lower_bound_weighted_stack}). The author expect this to be true for any split toric stacks.
\end{remark}

Using this hypothesis, it is enough to decompose the dual of the effective cone 
$\cone^{\vee}$ into a finite union of simplicial subcones $\{\Lambda_{j}\}_{j \in J}$ such that 
$\Lambda_j \cap \Lambda_k$ is either empty or contained in a hyperplane. Using the inclusion–exclusion 
principle, we obtain the formula
$$
\sharp T(\Q)_{h \in \mathrm{C}_B} 
= \sum_{k = 1}^n (-1)^{k-1} 
\sum_{1 \leqslant j_1 < \cdots < j_k \leqslant n}
\sharp T(\Q)_{h \in \mathrm{C}_B \bigcap_{i = 1}^k \Lambda_{j_i}}.
$$We will therefore start by highlighting a property of the heights associated with effective line bundles, 
focusing on the toric case. Then, we will conclude by an upper bound of the cardinality of 
$T(\Q)_{h \in \mathrm{C}_B \cap \Lambda_j \cap \Lambda_{j'}}$ for $j \neq j'$.

\subsection{A key property of heights associated with effective line bundles}

To extend the result to the case where the multi-height is bounded above but not below, we must restrict to an open subset of $X$ on which the logarithmic height associated with a given effective line bundle is bounded from below. Peyre has shown the following general property (see \cite[Lemma 4.5]{Peyre_beyond_height}):

\begin{lemma}
    Let $V$ be a quasi-Fano variety (see \cite[definition 2.10]{bongiorno2024multiheightanalysisrationalpoints}), e.g. a smooth and proper toric variety. Then there exists a dense open subset $U \subset V$ and $c \in \Pic(X)^{\vee}_{\R}$ such that for any $P \in U(\Q)$, $$h(P) \in c + \cone^{\vee} .$$
\end{lemma}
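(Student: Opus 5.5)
The plan is to reduce to the generator case by choosing a finite generating set of the effective cone, and to handle each generator by a base-locus argument.

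\emph{Generators.} Since $V$ is quasi-Fano, $\cone$ is a finitely generated (rational polyhedral) cone; for a toric variety this is clear, since it is generated by the classes $[D_\lambda]$, $\lambda\in\Sigma(1)$. Choose effective line bundles $M_1,\dots,M_r$ whose classes generate $\cone$.

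\emph{One section per generator.} For each $j$, fix a nonzero global section $s_j\in H^0(V,M_j)$ and set
$$U=V\setminus\bigcup_j \mathrm{div}(s_j).$$
This is a dense open subset. In the toric case one takes $s_j$ to be the torus-invariant section with divisor $D_\lambda$, so that $U=T$.

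\emph{Lower bound on heights.} The height $H_{M_j}$ comes from an adelic metric $\|\cdot\|_v$ on $M_j$. For $P\in U(\Q)$ we have $s_j(P)\neq0$, so the product formula gives
$$h_{M_j}(P)=-\sum_{v}\log\|s_j(P)\|_v.$$
For all but finitely many $v$ the metric is the model metric, so $\|s_j(P)\|_v\le 1$ there, since $s_j$ extends to an integral section of the model. At the finitely many remaining places, $\|s_j\|_v$ is a continuous function on the compact space $V(\Q_v)$, hence bounded above by some constant $e^{c_{j,v}}$. Therefore $h_{M_j}(P)\ge -\sum_v c_{j,v}=:c_j$ for every $P\in U(\Q)$.

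\emph{Passing to the whole cone.} Let $c\in\Pic(V)^\vee_{\R}$ be any linear form with $c(M_j)\le c_j$ for all $j$. Such a form exists: for instance take $c=-N\,\ell$, where $\ell$ is strictly positive on $\cone\setminus\{0\}$ (possible because the cone is strictly convex) and $N$ is large. Each $L\in\cone$ can be written as $L=\sum a_jM_j$ with $a_j\ge0$. The map $h(P)$ is linear on $\Pic(V)_{\R}$ up to bounded error, and in the toric setting it is exactly linear, as in the definition of the multi-height. Hence
$$h(P)(L)-c(L)=\sum_j a_j\bigl(h_{M_j}(P)-c(M_j)\bigr)\ge0,$$
which is the statement $h(P)\in c+\cone^\vee$.

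The main obstacle is this last linearity step. The multi-height must be genuinely linear in $L$, which depends on choosing compatible metrics, i.e. a system of heights in Peyre's sense. If it is only linear up to $O(1)$ for each $L$, then uniformity over the cone has to come from expressing everything in a fixed basis of $\Pic(V)$ and absorbing the bounded errors into $c$.
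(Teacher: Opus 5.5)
Your argument is correct, and it is the standard proof of Peyre's Lemma~4.5, which the paper cites rather than reproves. The ingredients are the same: finitely many effective generators of $\cone$, one nonvanishing section for each, the bound $\|s_j\|_v\leqslant 1$ at almost all places with compactness at the others, and exact linearity of $h(P)$. That last step is not really an obstacle, since for a system of heights $h(P)$ lies in $\Pic(X)^{\vee}_{\R}$ by definition.

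For toric varieties the paper instead invokes Salberger's Proposition~10.24 to get $U=T$ with $c=0$. Your argument also gives this: for the natural metrics the canonical sections of $\sheafO(D_\lambda)$ have norm at most $1$ at every place, so all your constants $c_j$ vanish.
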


\begin{question}
    Can one prove a similar result for nice DM stacks ?
\end{question}

For the case of smooth, proper and split toric varieties, we shall now explain how the previous lemma is verified if we take $U = T$ the dense open torus of $X$. 

\begin{prop}\label{proposition_lower_bound_toric_variety}
    We endow the smooth, proper and split toric variety $X$ with the natural system of heights (see \cite[Definition 9.2]{salberger_torsor}), if $h : X(\Q) \rightarrow \Pic(X)^{\vee}_{\R}$ is the associated multi-height map, we have for any $P \in T(\Q)$, $$h(P) \in \cone^{\vee} .$$
\end{prop}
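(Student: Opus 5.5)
The plan is to reduce the statement to the boundary divisors and then check a local inequality place by place. For a smooth, proper, split toric variety the effective cone $\cone$ is generated by the classes $[D_\lambda]$, $\lambda \in \Sigma(1)$. Hence $\cone^{\vee}$ is cut out by the inequalities $a([D_\lambda]) \geqslant 0$. Since $h(P)$ is a linear form on $\Pic(X)_{\R}$, it suffices to prove
\[
h(P)([D_\lambda]) \geqslant 0 \quad \text{for every } P \in T(\Q) \text{ and every } \lambda \in \Sigma(1).
\]

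Fix $\lambda$ and $P \in T(\Q)$. I would use the canonical section $s_\lambda$ of $\sheafO(D_\lambda)$, whose divisor is $D_\lambda$. Since $P$ lies in the torus, $s_\lambda(P) \neq 0$, so the height is a sum of local terms:
\[
h(P)([D_\lambda]) = \sum_{v \in M_{\Q}} -\log \| s_\lambda(P) \|_v .
\]
It then suffices to show $\| s_\lambda(P) \|_v \leqslant 1$ at every place $v$, archimedean ones included. Working locally is essential because no product formula cancellation is needed, and each term will be nonnegative on its own.

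The key step is to make Salberger's natural metric explicit \cite[Definition 9.2]{salberger_torsor}. I would lift $P$ to a point $y$ of the universal torsor $\stackT \subset \affine^{\Sigma(1)}$ lying in $T_{\Sigma(1)}$. For each maximal cone $\sigma$, let $m_\sigma$ be the character with $D_\lambda + \mathrm{div}(\chi^{m_\sigma})$ supported on the rays outside $\sigma$, and let $y^{D_{\lambda,\sigma}}$ be the corresponding monomial in the torsor coordinates. The natural metric is then
\[
\| s_\lambda(P) \|_v = \frac{|y_\lambda|_v}{\max_{\sigma} |y^{D_{\lambda,\sigma}}|_v},
\]
which is independent of the lift by homogeneity. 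Because the fan is complete, there is a maximal cone $\sigma_0$ not containing the ray $\lambda$. For that cone $m_{\sigma_0} = 0$ and $y^{D_{\lambda,\sigma_0}} = y_\lambda$. So $|y_\lambda|_v$ itself is one of the terms of the maximum, which gives $\| s_\lambda(P) \|_v \leqslant 1$, hence $-\log \| s_\lambda(P)\|_v \geqslant 0$. Summing over all places gives $h(P)([D_\lambda]) \geqslant 0$.

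The main obstacle is pinning down the natural metric for a line bundle like $\sheafO(D_\lambda)$, which need not be globally generated. Salberger defines it through the support function on the fan rather than through a basis of global sections. I would first check that his definition takes the max-over-maximal-cones form above, equivalently that the local height is $v \mapsto -\varphi_{D_\lambda}$ evaluated on the valuation vector $(\log|y_\mu|_v)_\mu$ in the cocharacter space. If it does not take that form directly, I would instead use the torsor local heights $h_{\stackT,v}$ from \cite[Definition 3.6]{bongiorno2024multiheightanalysisrationalpoints}. There I would verify that $h_{\stackT,v}(y)([D_\lambda])$ is the negative of the minimum over maximal cones of linear expressions, one of which is $-\log|y_\lambda|_v$ minus itself, namely $0$. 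That again gives nonnegativity at each place.
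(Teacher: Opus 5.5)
Your reduction to the generators $[D_\lambda]$ of $\cone$, and the plan of proving $-\log\|s_\lambda(P)\|_v\geqslant 0$ at every place, are sound. The gap is that the explicit metric you rely on is not the natural one.

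The formula $\|s_\lambda(P)\|_v=|y_\lambda|_v/\max_\sigma|y^{D_{\lambda,\sigma}}|_v$ is correct only when $D_\lambda$ is globally generated (equivalently, nef). Your decisive step, ``$|y_\lambda|_v$ is one of the terms of the maximum'', uses exactly this formula. For Salberger's natural metric, the frame $\chi^{m_\sigma}s_\lambda$ of $\mathcal{O}(D_\lambda)|_{U_\sigma}$ is normalised to have norm $1$ only on the compact piece of $U_\sigma(\Q_v)$. The trace of that piece on $T(\Q_v)$ is the set of points whose valuation vector $\bar t_v=-\sum_\mu\log|y_\mu|_v\,n_\mu\in N_{\R}$ lies in $\sigma$. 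Hence $-\log\|s_\lambda(P)\|_v=\varphi_{D_\lambda}(\bar t_v)$, where $\varphi_{D_\lambda}$ is the piecewise linear function with $\varphi_{D_\lambda}(n_\mu)=\delta_{\lambda\mu}$. Only the linear piece $-m_\sigma$ of a cone containing $\bar t_v$ enters.

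The maximum of all the pieces equals $\varphi_{D_\lambda}$ only when $\varphi_{D_\lambda}$ is convex, so the ``equivalence'' you assert with the support-function description fails in general. Moreover $\max_\sigma\langle -m_\sigma,\bar t_v\rangle\geqslant\varphi_{D_\lambda}(\bar t_v)$, so a lower bound for the maximum says nothing about the true local height.

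Here is a concrete failure. On the Hirzebruch surface $\mathbf{F}_1$ take rays $n_1=(1,0)$, $n_2=(0,1)$, $n_3=(-1,-1)$, $n_4=(1,1)$, and $\lambda=4$ (the $(-1)$-curve). Your formula reads $|y_4|_v/\max(|y_3/y_2|_v,|y_3/y_1|_v,|y_4|_v)$. For the integral lift $y=(N,1,N,1)$ it gives a total height of $\log N$. However, $[D_3]=[D_1]+[D_4]$ with $D_1$ and $D_3$ globally generated. Their heights are $\max(|y_1|,|y_2|)=N$ and $\max(|y_3|,|y_1y_4|,|y_2y_4|)=N$. Linearity of $h(P)$ therefore forces $h(P)([D_4])=0$, in agreement with $\varphi_{D_4}(\bar t_v)=0$ at every place. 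Your fallback via $h_{\stackT,v}$ is again written as an extremum over all maximal cones, so it has the same defect.

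The missing observation is short. On a maximal cone $\sigma$, $\varphi_{D_\lambda}$ is the $n_\lambda$-coordinate with respect to the basis $\sigma(1)$ if $\lambda\in\sigma(1)$, and is identically $0$ otherwise. In both cases it is nonnegative on $\sigma$, so $\varphi_{D_\lambda}\geqslant 0$ on all of $N_{\R}$ and every local term is $\geqslant 0$.

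Alternatively, take a lift $y\in\stackT(\Z)$. The coordinates divisible by a given $p$ lie in a common cone, so the finite places contribute exactly $\log|y_\lambda|$. The archimedean place contributes $\varphi_{D_\lambda}(\bar t_\infty)\geqslant 0$. Hence $H_{D_\lambda}(P)\geqslant|y_\lambda|\geqslant 1$, which has the same shape as the paper's argument for weighted projective stacks.

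With this correction, your proof becomes a self-contained verification of what the paper obtains simply by citing Salberger's Proposition 10.24.
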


\begin{proof}
    This is a consequence of \cite[Proposition 10.24]{salberger_torsor}.
\end{proof}

We now extend this result to the weighted projective stack $\stackP(\w)$ for a weight $\w \in (\N^*)^{n+1}$.

\begin{prop}\label{proposition_lower_bound_weighted_stack}
Let $h : \stackP(\w)(\Q) \rightarrow \Pic(X)^{\vee}_{\R}$ be the associated multi-height map. Then for any $P \in T(\Q)$ we have
\[
h(P) \in \cone^{\vee}.
\]
\end{prop}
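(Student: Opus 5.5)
The plan is to make everything explicit for $\stackP(\w)$, with $\w=(\w_0,\dots,\w_n)\in(\N^*)^{n+1}$, and to check the inequalities defining $\cone^{\vee}$ one generator of $\cone$ at a time. First I will determine the orbifold Picard group and the effective cone. For $\stackP(\w)$ the extended set $\Sigma(1)$ consists of the $n+1$ rays, indexed by $j\in\{0,\dots,n\}$, together with the twisted sectors. These twisted sectors are indexed by the ages, that is, by roots of unity $\zeta$ of order dividing some $\w_j$. The orbifold Picard group is then generated by the classes $[D_\lambda]$ for $\lambda\in\Sigma(1)$, subject to the linear relations coming from the character lattice of the torus. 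By \cite[Definition 3.17]{bongiorno_toric_stacks}, the orbifold effective cone $\cone$ is generated by these classes $[D_\lambda]$. So it suffices to prove that for every $P\in T(\Q)$ and every $\lambda\in\Sigma(1)$ we have
\[
\langle [D_\lambda], h(P)\rangle \geqslant 0 .
\]

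Next I will compute $h(P)$ through the extended universal torsor. For $P\in T(\Q)$ I will lift $P$ to an integral point $y\in\stackT(\Z)\cap T_{\Sigma(1)}(\R)$, using \cite[Theorem 6.23]{bongiorno_toric_stacks}. Concretely, $P$ is represented by a primitive weighted tuple $(x_0,\dots,x_n)\in\Z^{n+1}$ with all $x_j\neq0$. The coordinates attached to the twisted sectors record, prime by prime, the non-integral part of the "weighted gcd", namely the largest $d$ such that $d^{\w_j}$ divides $x_j$ for all $j$. In this lifting the multi-height splits as a sum of local heights, $h(P)=\sum_v h_{\stackT,v}(y)$. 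The finite local heights are $-\log|y_\lambda|_p$, which are nonnegative on every generator $[D_\lambda]$ because $y$ is integral. The archimedean term is
\[
\langle [D_\lambda], h_{\stackT,\infty}(y)\rangle
\]
and is given by the natural norms, i.e. a $\max_j |x_j|_\infty^{1/\w_j}$-type expression, twisted by the sector coordinates. In this way, pairing $h(P)$ with $[D_\lambda]$ becomes $\log$ of an explicit ratio.

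The core step is then to show that each of these ratios is at least $1$. Pairing with a ray class $[D_j]$ should give a quantity comparing $\max_k|x_k|^{1/\w_k}$ with the $p$-adic contributions. This ratio is at least $1$ because a primitive integral tuple satisfies $\prod_p\max_k|x_k|_p^{1/\w_k}\leqslant 1$, combined with the product formula. Pairing with a twisted-sector class should amount to the statement that the weighted gcd $d$ satisfies $d^{\w_j}\leqslant|x_j|$ for some, hence for the maximising, index $j$. This inequality holds since $x_j\neq0$.

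I expect the main obstacle to be bookkeeping rather than analysis. It lies in identifying precisely the orbifold Picard group, its generators, and the local height conventions of \cite[Definition 3.26, Section 6]{bongiorno_toric_stacks} for the twisted sectors. Getting the signs and the age factors $1/\w_j$ right matters here, because a single wrong normalisation breaks positivity. As a fallback, I would mimic Salberger's argument \cite[Proposition 10.24]{salberger_torsor}: express $\langle L,h(P)\rangle$, for $L$ effective, as $\log$ of $\max$ over monomial sections $s$ of $L$ of $\|s(y)\|^{-1}$. Since the orbifold effective cone is generated by torus-invariant (possibly twisted) divisors, there is always a monomial section that is integral at every place with archimedean norm $\leqslant$ the relevant maximum. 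This yields nonnegativity directly.
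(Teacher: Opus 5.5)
Your outline is the paper's: reduce to generators of the orbifold effective cone, then evaluate $h(P)$ on them through an integral lift whose $j$-th weighted coordinate is $x_j=\bigl(\prod_t k_t^{m_t(j)}\bigr)y'_j$. However, the step that carries the content is misidentified, and the inequality you propose for the ray classes is too weak.

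First, the generators do not come from \cite[Definition 3.17]{bongiorno_toric_stacks}. The paper needs \cite[Theorem~1.1]{darda2025orbifoldpseudoeffectiveconestoric} to know that $\cone$ is generated by the $[\stackS_t]$ together with the specific classes $[D_i]_0=(\sheafO(\w_i),\varphi_i)$, where $\varphi_i(\stackS_t)=-\{-\w_i t\}$. In other words,
\[
[D_i]_0=\w_i[\sheafO(1)]_{\stack}-\sum_t\lceil \w_i t\rceil[\stackS_t],
\qquad
h(P)([D_i]_0)=\w_i\log\max_j|x_j|^{1/\w_j}-\sum_t\lceil \w_i t\rceil\log|k_t| .
\]
For the sector classes nothing needs to be compared: $h(P)([\stackS_t])=\log|k_t|\geqslant0$ because $k_t$ is a nonzero integer. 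Your description of these coordinates is also off, since for a $\w$-primitive tuple the largest $d$ with $d^{\w_j}\mid x_j$ for all $j$ is $1$.

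The real gap is in the ray classes. Integrality, $\prod_p\max_k|x_k|_p^{1/\w_k}\leqslant1$, together with the product formula gives at best $\max_k|x_k|_\infty^{1/\w_k}\geqslant\prod_p p^{e_p}$, where $e_p=\min_k v_p(x_k)/\w_k$. That is a lower bound in which a prime $p$ in sector $t$ has weight $\w_i t$. The class $[D_i]_0$ instead requires weight $\lceil\w_i t\rceil$, which is strictly larger whenever $\w_i t\notin\Z$.

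Concretely, take $\stackP(1,2)$ and $x=(2,2)$. Then $e_2=\tfrac12$ and $k_{1/2}=2$, so $h(P)([D_0]_0)=\log 2-\log 2=0$. Your bound only yields $h(P)([D_0]_0)\geqslant-\tfrac12\log 2$.

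The missing step is the paper's. Bound the maximum below by the $i$-th term alone, $\w_i\log\max_j|x_j|^{1/\w_j}\geqslant\log|x_i|$. Then use that the exponent of $k_t$ in $x_i$ is $m_t(i)=\lceil \w_i t\rceil$, exactly the coefficient appearing in $[D_i]_0$; equivalently, $v_p(x_i)$ is an integer $\geqslant \w_i e_p$, hence $\geqslant\lceil\w_i e_p\rceil$. This gives $h(P)([D_i]_0)\geqslant\log|y'_i|\geqslant0$.

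Your Salberger-style fallback does not supply this either. The classes $[\stackS_t]$ and the raising function of $[D_i]_0$ are not described by monomial sections of a line bundle.
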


\begin{proof}
We use the notation of \cite[Appendix~B.1]{bongiorno_toric_stacks}, to which we refer the reader for further details. For 
\[
t \in \bigcup_{0 \le i \le n} \frac{1}{\w_i}\Z \cap [0,1[,
\]
we denote by $\stackS_t$ the unique sector such that
\[
\age_{\num}(\stackS_t,\sheafO(1)) = t .
\]

For $i \in \{0,\dots,n\}$, we denote by $[D_i]_0$ the element of the orbifold Picard group of $\stackP(\w)$ corresponding to
\[
(\sheafO(\w_i),\varphi_i),
\]
where for any
\[
t \in \bigcup_{0 \le i \le n} \frac{1}{\w_i}\Z \cap ]0,1[,
\]
we have
\[
\varphi_i(\stackS_t) = -\{-\w_i t\}.
\]

The orbifold effective cone of  $\stackP(\w)$ is then given by the cone generated by
\[
\{ [D_i]_0 \mid 0 \le i \le n \}
\cup
\{ [\stackS_t] \mid t \in \bigcup_{0 \le i \le n} \tfrac{1}{\w_i}\Z \cap ]0,1[ \}
\]
by \cite[Theorem~1.1]{darda2025orbifoldpseudoeffectiveconestoric}.

Let $P \in T(\Q)$ and let $y \in \stackT(\Z) \cap \G_m(\R)^{\Sigma(1)}$ be a lift of $P$ (keeping the notation of that article). Using \cite[Theorem~B.6]{bongiorno_toric_stacks} and the discussion following it, we have
\[
h(P)([\stackS_t]) = \log(|k_t|) \ge 0
\]
for all $t \in \bigcup_{0 \le i \le n} \frac{1}{\w_i}\Z \cap ]0,1[$.

Moreover,
\begin{align*}
[D_i]_0
&=
\w_i [\sheafO(1)]_{\stack}
-
\sum_t \bigl(\w_i t + \{-\w_i t\}\bigr)[\stackS_t] \\
&=
\w_i [\sheafO(1)]_{\stack}
-
\sum_t \lceil \w_i t\rceil[\stackS_t] \, .
\end{align*}
Therefore we have
\begin{align*}
h(P)([D_i]_0) &=  \w_i \log \left(
\max\limits_{0 \leqslant j \leqslant n}
\left|
\left(
\prod\limits_{t \in \bigcup\limits_{0 \leqslant i \leqslant n} \frac{1}{w_i}\Z \cap ]0,1[}
k_t^{m_t(j)}
\right)
y'_j
\right|^{\frac{1}{w_j}}
\right)  -
\sum_t \lceil \w_i t\rceil \log(|k_t|) \\
&\geqslant \log|y'_i| 
\end{align*}where the last inequality is a consequence of the equality $$m_t(i) = \lceil \w_i t\rceil $$for all $t \in \bigcup_{0 \le i \le n} \frac{1}{\w_i}\Z \cap ]0,1[$.

\end{proof}

\subsection{The decomposition in simplicial subcones and a crucial upper bound}

In this paragraph, we explain how to construct a decomposition of the dual cone of the effective cone, $\cone^{\vee}$, into simplicial subcones. Recall that $\w_{X}^{-1} \in \cone^{\circ} $, i.e. it lies in the interior of $\cone$ and that $\cone^{\vee}$ is finitely generated. We may consider the polytope $F$ given by the intersection of $\cone^{\vee}$ with the affine hyperplane $\langle \w_{X}^{-1}, - \rangle = 1$. By construction, $F$ is a compact convex polytope whose vertices come from the extremal directions of $\cone^{\vee}$.

We choose a suitable triangulation, i.e. a decomposition of $F$ into affine simplices of dimension $\rho - 1$.  In particular, we may write
$$
F = \bigcup_{j \in J} \Delta_j
$$
where each $\Delta_j$ is a $(\rho-1)$–simplex, the intersection $\Delta_i \cap \Delta_j$ is either empty or a simplex of dimension $(\rho - 2)$, and $J$ is a finite index set. Such a decomposition exists by \cite[Chapter~2]{DeLoeraRambauSantosTriangulations}.

For each $j \in J$, we consider the cone $\Lambda_j$ generated by $\Delta_j$.  
This is a simplicial cone of maximal dimension.  
We then have
\begin{equation}\label{equation_decomposition_simplical_subcones}
    \cone^{\vee} = \bigcup_{j \in J} \Lambda_j
\end{equation}and for $i \neq j$, the intersection $\Lambda_i \cap \Lambda_j$ is either empty or a simplicial cone of dimension $\rho - 1$.

For the remainder of the paper, we fix such a decomposition of $\cone^{\vee}$ into simplicial subcones. Now, we aim to show that one may neglect rational points of bounded anti-canonical height whose multi-height lies in the intersection of two distinct simplicial subcones $\Lambda_j$ and $\Lambda_i$ with $j \neq i$.

\begin{prop}\label{proposition_majoration_cone_nombre_points_intersection}
    Let $i \neq j \in J$ such that $\Lambda_i \cap \Lambda_j$ is non empty, i.e. it is of dimension $\rho - 1$. Let $v_1,..,v_{\rho-1}$ be a generating set of $\Lambda_i \cap \Lambda_j$ and $h$ an element not contained in the hyperplane generated by $\Lambda_i \cap \Lambda_j$. For $\delta >0$, we write $K_{\delta}$ for the simplicial cone generated by $\{v_1,..,v_{\rho-1}, \sum\limits_{i = 1}^{\rho} v_i + \delta h \} $. We suppose that $\delta$ is sufficiently small such that $\sum\limits_{i = 1}^{\rho} v_i + \delta h \in \cone^{\vee}$.
    
    Then we have the following upper bound:
    \[
    \#\, T(\Q)_{h \in \mathrm{C}_B \cap \Lambda_i \cap \Lambda_{j}} \leqslant
    \left( \frac{\tau(X)}{(\rho - 1)!} + 1 \right) \cdot \nu(-K_{\delta}) \cdot B \log(B)^{\rho - 1} \quad \text{as } B \to \infty.
    \]
\end{prop}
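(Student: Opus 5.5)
The strategy is to enclose the codimension-one cone $\Lambda_i \cap \Lambda_j$ in a full-dimensional simplicial cone contained in $\cone^{\vee}$, and then run the simplicial-cone machinery of Theorem \ref{theorem_hyperbola_method_simplicial_cone} on that cone. The cone $K_\delta$ is built for exactly this purpose. It is generated by $v_1,\dots,v_{\rho-1}$ and one extra vector $w_\delta = \sum v_i + \delta h$, which lies off the hyperplane spanned by $\Lambda_i\cap\Lambda_j$. Hence $K_\delta$ is simplicial of maximal dimension and contains $\mathrm{cone}(v_1,\dots,v_{\rho-1}) = \Lambda_i \cap \Lambda_j$. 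By the assumption $w_\delta \in \cone^{\vee}$, we also have $K_\delta \subset \cone^{\vee}$. The first step is therefore the inclusion
\[
T(\Q)_{h \in \mathrm{C}_B \cap \Lambda_i \cap \Lambda_j} \subset T(\Q)_{h \in \mathrm{C}_B \cap K_\delta},
\]
which yields $\#T(\Q)_{h \in \mathrm{C}_B \cap \Lambda_i \cap \Lambda_j} \leqslant \#T(\Q)_{h \in \mathrm{C}_B \cap K_\delta}$.

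The second step applies everything in Sections 2 and 3 with $\Lambda$ replaced by $K_\delta$. The constructions there used only that $\Lambda$ is a simplicial subcone of $\cone^{\vee}$ of maximal dimension. So we take $\base=(L_1,\dots,L_\rho)$ to be the basis dual to the generators of $K_\delta$, so that $K_\delta^\vee = \mathrm{cone}(L_1,\dots,L_\rho)$. Then Theorem \ref{theorem_multi_height_value_in_subcone}, Proposition \ref{proposition_first_hypothesis_arithmetic_function}, Proposition \ref{proposition_assumptions_polyhedron} and Proposition \ref{proposition_computation_constant_polyhedron} all hold verbatim. Theorem \ref{theorem_hyperbola_method_simplicial_cone} then gives
\[
\#T(\Q)_{h \in \mathrm{C}_B \cap K_\delta} = \frac{1}{(\rho-1)!}\,\nu(-K_\delta)\,\tau(X)\, B\log(B)^{\rho-1} + O\!\left(\log(\log B)^{\rho}\log(B)^{\rho-2}B\right).
\]
Since $\nu(-K_\delta)>0$, for $B$ large the error term is at most $\nu(-K_\delta) B \log(B)^{\rho-1}$. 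This produces the announced bound with constant $\tau(X)/(\rho-1)! + 1$.

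Two points need care. First, the exponents $\alpha_i = \langle \w_X^{-1}, L_i^*\rangle$ must again be positive. Here $L_i^*$ are the generators of $K_\delta \subset \cone^{\vee}$, which are nonzero, and $\w_X^{-1}\in\cone^{\circ}$, so $\alpha_i > 0$. Second, the exponent of $B$ must be exactly $1$, that is, the polyhedron must be the standard simplex. This requires normalising the generators so that $\langle \w_X^{-1}, \cdot\rangle = 1$ on each of them. Equivalently, I would rescale $v_1,\dots,v_{\rho-1}$ and $w_\delta$ to lie on the affine slice $F$. The rescaling does not change the cone $K_\delta$, so $\nu(-K_\delta)$ is unaffected.

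I expect the main obstacle to be the conceptual point that the estimate for $K_\delta$ is an upper bound only because $K_\delta\supset\Lambda_i\cap\Lambda_j$. The real content, used later, is that $\nu(-K_\delta)\to 0$ as $\delta\to 0$, because $K_\delta$ degenerates to a cone of dimension $\rho-1$. This is what makes the intersection terms negligible in the inclusion–exclusion formula. For the present statement, though, only the fixed-$\delta$ bound above is needed.
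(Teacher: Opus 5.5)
Your proposal is correct and follows the paper's own proof. You enclose $\Lambda_i\cap\Lambda_j$ in the maximal-dimensional simplicial cone $K_\delta\subset\cone^{\vee}$, apply Theorem~\ref{theorem_hyperbola_method_simplicial_cone} to $K_\delta$, and absorb the error term into $\nu(-K_\delta)\,B\log(B)^{\rho-1}$ for $B$ large. One small point: you do not need to normalise the generators onto the slice $\langle \w_X^{-1},\cdot\rangle=1$, since the exponents $\w_i$ and $\alpha_i$ both equal $\langle \w_X^{-1},L_i^*\rangle$. Hence $\alpha_i\w_i^{-1}=1$, and $\stackP$ is the standard simplex for any choice of generators; the rescaling is harmless but unnecessary.
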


\begin{proof}
With our assumptions on the fixed element $\delta>0$, we have $K_\delta \subset \cone^{\vee}$.
Hence, by applying Theorem~\ref{theorem_hyperbola_method_simplicial_cone}, we obtain the claimed upper bound for $B$ sufficiently large.
\end{proof}

We can now conclude that the cardinality
$\sharp T(\Q)_{h \in \mathrm{C}_B \cap \Lambda_i \cap \Lambda_j}$
is negligible.

\begin{prop}\label{proposition_majoration_nombre_points_intersection}
Let $i \neq j \in J$ be such that $\Lambda_i \cap \Lambda_j$ is nonempty. Then
\[
\sharp T(\Q)_{h \in \mathrm{C}_B \cap \Lambda_i \cap \Lambda_j}
= o\!\left( B \log(B)^{\rho-1} \right)
\quad \text{as } B \to \infty.
\]
\end{prop}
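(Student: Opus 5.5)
The plan is to combine the upper bound of Proposition~\ref{proposition_majoration_cone_nombre_points_intersection} with the fact that $\nu(-K_\delta)$ tends to $0$ as $\delta \to 0$. Since $\Lambda_i \cap \Lambda_j \subset K_\delta$ for every admissible $\delta>0$, that proposition gives, for each fixed $\delta$,
\[
\limsup_{B \to \infty} \frac{\sharp T(\Q)_{h \in \mathrm{C}_B \cap \Lambda_i \cap \Lambda_j}}{B \log(B)^{\rho-1}} \leqslant \left( \frac{\tau(X)}{(\rho-1)!} + 1 \right) \nu(-K_\delta).
\]
The left-hand side does not depend on $\delta$. It therefore suffices to show that $\nu(-K_\delta) \to 0$ as $\delta \to 0^+$, and then let $\delta \to 0$.

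The key step is to compute $\nu(-K_\delta)$ by a linear change of variables. Write $w_\delta = \sum_l v_l + \delta h$ (the generator in the statement). The map $\phi_\delta : \R^\rho_{\geqslant 0} \to K_\delta$, $s \mapsto \sum_{l} s_l v_l + s_\rho w_\delta$, has Jacobian $|\det(v_1,\dots,v_{\rho-1},w_\delta)| = \delta\, |\det(v_1,\dots,v_{\rho-1},h)|$, which is linear in $\delta$ and nonzero because $h$ lies off the hyperplane spanned by $\Lambda_i\cap\Lambda_j$. Hence
\[
\nu(-K_\delta) = \delta\,|\det(v_1,\dots,v_{\rho-1},h)| \int_{\R_{\geqslant 0}^\rho} e^{-\sum_{l<\rho} s_l \langle \w_X^{-1}, v_l\rangle - s_\rho \langle \w_X^{-1}, w_\delta\rangle}\, ds
= \frac{\delta\,|\det(v_1,\dots,v_{\rho-1},h)|}{\prod_{l<\rho}\langle \w_X^{-1},v_l\rangle \cdot \langle \w_X^{-1}, w_\delta\rangle}.
\]
All pairings are strictly positive, because $\w_X^{-1}$ lies in the interior of $\cone$ and $v_l, w_\delta \in \cone^{\vee}\setminus\{0\}$. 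Moreover $\langle \w_X^{-1}, w_\delta\rangle \to \langle \w_X^{-1}, \sum_l v_l\rangle > 0$ as $\delta\to 0$. So $\nu(-K_\delta) = O(\delta)$, which gives the claim.

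The main obstacle is the uniformity hidden in Proposition~\ref{proposition_majoration_cone_nombre_points_intersection}. The threshold on $B$ there depends on $\delta$, and the "$+1$" has to absorb both the error term and the constant. This is harmless because $\delta$ is fixed before letting $B\to\infty$, and the limsup formulation above handles it.

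A second point to check is that Theorem~\ref{theorem_hyperbola_method_simplicial_cone} applies to $K_\delta$, so that the upper bound is legitimate. This requires $K_\delta$ to be a simplicial cone of maximal dimension contained in $\cone^{\vee}$, which is guaranteed for small $\delta$ by the hypothesis $w_\delta \in \cone^{\vee}$ and by convexity. I would also check the sign or side of $h$ so that $K_\delta$ genuinely contains $\Lambda_i\cap\Lambda_j$. This containment is immediate, since $v_1,\dots,v_{\rho-1}$ are among the generators of $K_\delta$.
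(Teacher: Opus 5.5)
Your proposal is correct and follows essentially the same route as the paper. For fixed $\delta$ you bound the count over $\Lambda_i\cap\Lambda_j$ by the count over $K_\delta$ via Proposition~\ref{proposition_majoration_cone_nombre_points_intersection}, then make $\nu(-K_\delta)$ small (the paper uses an explicit $\varepsilon$ where you use a $\limsup$), and your computation $\nu(-K_\delta)=\frac{\delta\,|\det(v_1,\dots,v_{\rho-1},h)|}{\prod_{l<\rho}\langle \w_X^{-1},v_l\rangle\cdot\langle \w_X^{-1},w_\delta\rangle}=O(\delta)$ justifies that smallness, which the paper only asserts.
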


\begin{proof}
Let $\varepsilon>0$. We use the notation of
Proposition~\ref{proposition_majoration_cone_nombre_points_intersection} and assume that $\delta$ is sufficiently small so that
\[
\sum_{i=1}^{\rho} v_i + \delta h \in \cone^{\vee}
\quad\text{and}\quad
\nu(-K_\delta) \leqslant \frac{\varepsilon}{\frac{\tau(X)}{(\rho - 1)!} + 1}.
\]
Then, for $B$ sufficiently large, we obtain
\[
\sharp T(\Q)_{h \in \mathrm{C}_B \cap \Lambda_i \cap \Lambda_j}
\leqslant \varepsilon\, B \log(B)^{\rho-1},
\]
which yields the desired result.
\end{proof}

\subsection{The computation of the precise constant}

Let us recall here the definition of the constant $\alpha(X)$ (see \cite[Definition 2.1]{peyre_duke} and \cite[Notations 2.6.1]{peyre_zeta_height_function}).

\begin{definition}\label{definition_effective_leading_constant}
The constant $\alpha(X)$ is defined by
$$
\alpha(X) = \frac{1}{(\rho - 1)!} \cdot \int_{\cone^{\vee}} e^{-\langle \w_{X}^{-1} , y \rangle } \, dy,
$$
where the Haar measure $dy$ on $\Pic(X)^{\vee}_{\R}$ is normalized so that the covolume of the dual lattice of the Picard group is one.
\end{definition}

We can now state the following computation:

\begin{prop}\label{proposition_computation_effective_constant}
    We have the following equality:
    $$\alpha(X) = \sum\limits_{j \in J} \frac{1}{(\rho -1)!} \cdot \nu(-\Lambda_j)   $$where $\{ \Lambda_j \}_{j \in J}$ is the decomposition in simplicial subcones we have chosen in the previous paragraph.
\end{prop}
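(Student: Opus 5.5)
The proof is a change of variables followed by the additivity of the integral over the decomposition of $\cone^{\vee}$. By Definition \ref{mesure_picard_group}, $\nu(-\Lambda_j) = \int_{-\Lambda_j} e^{\langle \w_X^{-1}, y\rangle}\,dy$. This is an integral over a non-compact cone, so we first check that it converges. Since $\w_X^{-1} \in \cone^{\circ}$ and $\Lambda_j \subset \cone^{\vee}$, the linear form $\langle \w_X^{-1}, \cdot \rangle$ is strictly positive on $\Lambda_j \setminus \{0\}$. Convergence can also be read off from the proof of Lemma \ref{lemma_majoration_reste_serie}, where $\nu(-\Lambda)$ appears as a convergent sum of geometric series $\sum_n \nu(\mathrm{D}_n)$.

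Next we apply the substitution $y \mapsto -y$. It preserves the Haar measure $dy$, normalized so that the dual lattice of $\Pic(X)$ has covolume one. We get
\[
\nu(-\Lambda_j) = \int_{\Lambda_j} e^{-\langle \w_X^{-1}, y\rangle}\,dy .
\]

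We then sum over $j \in J$ and use the decomposition \eqref{equation_decomposition_simplical_subcones}: $\cone^{\vee} = \bigcup_{j} \Lambda_j$, and for $i\neq j$ the intersection $\Lambda_i\cap\Lambda_j$ is empty or a cone of dimension $\rho-1$. Such an intersection lies in a hyperplane, so it has Lebesgue measure zero. The integrand is nonnegative and integrable, so the integral over $\cone^{\vee}$ is the sum of the integrals over the $\Lambda_j$. This can be justified by inclusion--exclusion, where all higher intersection terms vanish, or simply because $\mathbf{1}_{\cone^{\vee}} = \sum_j \mathbf{1}_{\Lambda_j}$ almost everywhere. This gives
\[
\int_{\cone^{\vee}} e^{-\langle \w_X^{-1}, y\rangle}\,dy = \sum_{j\in J} \nu(-\Lambda_j).
\]
Multiplying by $\frac{1}{(\rho-1)!}$ and comparing with Definition \ref{definition_effective_leading_constant} yields the claimed equality.

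The only step that needs care is the measure-zero claim for the overlaps. If the triangulation of the polytope $F$ is only assumed to have pairwise intersections that are common faces, possibly of dimension lower than $\rho-2$, I would note that every proper face of a $(\rho-1)$-simplex in the affine hyperplane $\langle \w_X^{-1},-\rangle = 1$ spans a cone of dimension at most $\rho-1$. Such a cone is null for $dy$, so the argument is unchanged. Convergence and the sign change are routine.
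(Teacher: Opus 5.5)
Your proof is correct and follows the same route as the paper, whose entire proof is the observation that the identity follows from the decomposition $\cone^{\vee}=\bigcup_{j\in J}\Lambda_j$ and the definition of $\alpha(X)$. You have made explicit what the paper leaves implicit: the substitution $y\mapsto -y$, convergence from the strict positivity of $\langle \w_X^{-1},\cdot\rangle$ on $\cone^{\vee}\setminus\{0\}$, and the fact that the overlaps $\Lambda_i\cap\Lambda_j$ lie in hyperplanes and are therefore null. Your remark that lower-dimensional overlaps change nothing is a worthwhile safeguard, since a general triangulation need not have the property the paper states, namely that pairwise intersections are empty or of dimension exactly $\rho-2$.
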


\begin{proof}
    This is a direct consequence of the decomposition of equation \ref{equation_decomposition_simplical_subcones} and of definition \ref{definition_effective_leading_constant}.
\end{proof}

We can now state again and prove the theorem \ref{theorem_article}:

\begin{theorem}
    We have the following asymptotic behaviour: $$\sharp T(\Q)_{h(P)(\w_X^{-1}) \leqslant \log(B)}   \underset{B \rightarrow +\infty}{\sim} \alpha(X) \tau(X) B^{\langle \w_{X}^{-1},u \rangle} \log(B)^{\rho(X) - 1} $$
\end{theorem}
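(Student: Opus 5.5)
The plan is to assemble the final asymptotic from the simplicial pieces, using Hypothesis \ref{hypothesis_lower_bound_multi_height} to localise the multi-height. Since every $P\in T(\Q)$ has $h(P)\in\cone^{\vee}$, and $\cone^{\vee}=\bigcup_{j\in J}\Lambda_j$ by \eqref{equation_decomposition_simplical_subcones}, we have
\[
T(\Q)_{h(P)(\w_X^{-1})\leqslant \log(B)} = T(\Q)_{h\in \mathrm{C}_B} = \bigcup_{j\in J} T(\Q)_{h\in \mathrm{C}_B\cap\Lambda_j}.
\]
Before counting, I will note that the exponent $\langle \w_X^{-1},u\rangle$ in the statement must be read as $1$. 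This is the normalisation implicit in Theorem \ref{theorem_hyperbola_method_simplicial_cone}, where the maximal value $a=1$ by Proposition \ref{proposition_assumptions_polyhedron}. Equivalently, $u$ is normalised so that $\langle\w_X^{-1},u\rangle=1$.

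The counting step is inclusion–exclusion over the finite family $\{\Lambda_j\}$. The first-order terms are exactly $\sum_j \sharp T(\Q)_{h\in\mathrm{C}_B\cap\Lambda_j}$. Every higher-order term involves some $\Lambda_{j_1}\cap\Lambda_{j_2}\cap\cdots$ with at least two distinct indices. Such a set is contained in some $\Lambda_i\cap\Lambda_j$ with $i\neq j$, which is either empty or of the type treated in Proposition \ref{proposition_majoration_nombre_points_intersection}. Hence every higher-order term is $o(B\log(B)^{\rho-1})$, and there are finitely many of them. It therefore suffices to use the cruder sandwich
\[
\sum_j \sharp T(\Q)_{h\in\mathrm{C}_B\cap\Lambda_j} - \sum_{i<j}\sharp T(\Q)_{h\in\mathrm{C}_B\cap\Lambda_i\cap\Lambda_j} \leqslant \sharp T(\Q)_{h\in\mathrm{C}_B}\leqslant \sum_j \sharp T(\Q)_{h\in\mathrm{C}_B\cap\Lambda_j},
\]
which avoids any sign bookkeeping.

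Next I will apply Theorem \ref{theorem_hyperbola_method_simplicial_cone} to each $\Lambda_j$. This gives
\[
\sharp T(\Q)_{h\in \mathrm{C}_B\cap\Lambda_j} = \frac{1}{(\rho-1)!}\,\nu(-\Lambda_j)\,\tau(X)\,B\log(B)^{\rho-1} + O\bigl(\log\log(B)^{\rho}\log(B)^{\rho-2}B\bigr),
\]
and the error term is $o(B\log(B)^{\rho-1})$. Summing over $j$ and invoking Proposition \ref{proposition_computation_effective_constant} turns the main term into $\alpha(X)\tau(X)B\log(B)^{\rho-1}$. Combined with the sandwich above, this yields the asymptotic equivalence.

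The step requiring care is the matching of constants: $\nu(-\Lambda_j)$ as defined uses the weight $e^{\langle\w_X^{-1},y\rangle}$ on $-\Lambda_j$. This must be identified with $\int_{\Lambda_j}e^{-\langle \w_X^{-1},y\rangle}dy$ via the change of variables $y\mapsto -y$. That identification is what makes the sum over $j$ equal $(\rho-1)!\,\alpha(X)$. Since the $\Lambda_j$ overlap only along hyperplanes, which have Lebesgue measure zero, the integrals add exactly.

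The main obstacle is the legitimacy of Proposition \ref{proposition_majoration_nombre_points_intersection}, on which everything else rests. The cone $K_\delta$ containing $\Lambda_i\cap\Lambda_j$ must be simplicial, of maximal dimension, and contained in $\cone^{\vee}$. This requires perturbing towards the side where $\cone^{\vee}$ continues; such a side exists because the common face of two distinct maximal cones in the triangulation is interior to $\cone^{\vee}$. With that in place, $\nu(-K_\delta)\to0$ as $\delta\to0$ by dominated convergence, and the argument closes.
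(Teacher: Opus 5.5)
Your proposal is correct and follows essentially the paper's own proof. It uses Hypothesis~\ref{hypothesis_lower_bound_multi_height} with the simplicial decomposition, Theorem~\ref{theorem_hyperbola_method_simplicial_cone} on each $\Lambda_j$, Proposition~\ref{proposition_majoration_nombre_points_intersection} for the overlaps, and Proposition~\ref{proposition_computation_effective_constant} for the constant, with the exponent correctly read as $1$; your two-term Bonferroni sandwich is just a tidier substitute for the paper's full inclusion--exclusion. One small caveat you inherit from the paper: in a triangulation two cones $\Lambda_i,\Lambda_j$ may meet in a face of dimension $<\rho-1$, but such an intersection lies in a facet of $\Lambda_i$, to which the same $K_\delta$ argument (perturbing into $\Lambda_i$) applies.
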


\begin{proof}
    Recall that for $B \geqslant 1$, we write $$\mathrm{C}_B = \{ a \in \Pic(X)_{\R}^{\vee} \mid a(\w_{X}^{-1}) \leqslant \log(B) \} .$$
    Since by Hypothesis \ref{hypothesis_lower_bound_multi_height}, we have for any $P \in T(\Q)$, $h(P) \in \cone^{\vee}$, we can write using our chosen decomposition in simplicial subcones and the inclusion-exclusion principle the equality: $$\sharp T(\Q)_{h \in \mathrm{C}_B} 
= \sum_{k = 1}^n (-1)^{k-1} 
\sum_{1 \leqslant j_1 < \cdots < j_k \leqslant n}
\sharp T(\Q)_{h \in \mathrm{C}_B \bigcap_{i = 1}^k \Lambda_{j_i}}.$$Now we have using proposition \ref{proposition_computation_effective_constant}:
\begin{align*}
    & \sharp T(\Q)_{h(P)(\w_X^{-1}) \leqslant \log(B)}   - \alpha(X) \tau(X) B^{\langle \w_{X}^{-1},u \rangle} \log(B)^{\rho(X) - 1} \\
    &= \sum\limits_{j \in J} \left( \sharp T(\Q)_{h \in \mathrm{C}_B \cap \Lambda_j} - \frac{1}{(\rho-1)!} \cdot \nu(-\Lambda_j) \cdot \tau(X) B^{\langle \w_{X}^{-1},u \rangle} \log(B)^{\rho(X) - 1} \right) \\
    &+ \sum_{k > 1}^n (-1)^{k-1} 
\sum_{1 \leqslant j_1 < \cdots < j_k \leqslant n}
\sharp T(\Q)_{h(P) \in \mathrm{C}_B \bigcap_{i = 1}^k \Lambda_{j_i}}
\end{align*}

We conclude using Theorem \ref{theorem_hyperbola_method_simplicial_cone} and Proposition \ref{proposition_majoration_nombre_points_intersection}.
\end{proof}


\bibliographystyle{amsplain}
\bibliography{bibliography}

@Inbook{Peyre_beyond_height,
author="Peyre, Emmanuel",
title="Chapter \rm V: Beyond Heights: Slopes and Distribution of Rational Points",
bookTitle="Arakelov Geometry and Diophantine Applications",
year="2021",
publisher="Springer International Publishing",
address="Cham",
pages="215--279",
isbn="978-3-030-57559-5",
doi="10.1007/978-3-030-57559-5_6",
url="https://doi.org/10.1007/978-3-030-57559-5_6"
}

@article{peyre_duke,
author="Peyre, Emmanuel",
title = {{Hauteurs et mesures de Tamagawa sur les variétés de Fano}},
volume = {79},
journal = {Duke Mathematical Journal},
number = {1},
publisher = {Duke University Press},
pages = {101--218},
year = {1995},
doi = {10.1215/S0012-7094-95-07904-6},
URL = {https://doi.org/10.1215/S0012-7094-95-07904-6}
}

@incollection{peyre_zeta_height_function,
     author="Peyre, Emmanuel",
     title = {Terme principal de la fonction z\^eta des hauteurs et torseurs universels},
     booktitle = {Nombre et r\'epartition de points de hauteur born\'ee},
     editor = {Peyre Emmanuel},
     series = {Ast\'erisque},
     pages = {259--298},
     publisher = {Soci\'et\'e math\'ematique de France},
     number = {251},
     year = {1998},
     mrnumber = {1679842},
     zbl = {0966.14016},
     language = {fr},
     url = {http://www.numdam.org/item/AST_1998__251__259_0/}
}

@article{Batyrev1990,
author = {Batyrev, Victor V. and Manin, Yuri},
journal = {Mathematische Annalen},
language = {fre},
number = {1-3},
pages = {27-44},
title = {Sur le nombre des points rationnels de hauteur borné des variétés algébriques.},
url = {http://eudml.org/doc/164626},
volume = {286},
year = {1990},
}

@article{FrankeManinTschinkel1989,
  author  = {Franke, Jens and Manin, Yuri I. and Tschinkel, Yuri},
  title   = {Rational points of bounded height on Fano varieties},
  journal = {Inventiones Mathematicae},
  volume  = {95},
  year    = {1989},
  number  = {2},
  pages   = {421--435},
  doi     = {10.1007/BF01393904},
  mrnumber = {0978033},
  mrclass  = {11G35 (14G25)}
}

@article{batyrev_toric_varieties,
  author  = {Batyrev, Victor V. and Tschinkel, Yuri},
  title   = {Manin's conjecture for toric varieties},
  journal = {Journal of Algebraic Geometry},
  volume  = {7},
  year    = {1998},
  number  = {1},
  pages   = {15--53},
  mrnumber = {1603610},
  mrclass  = {14G05 (11G35)}
}

@inbook{salberger_torsor,
    author = {SALBERGER, Per},
    title = {Tamagawa measures on universal torsors and points of bounded height on Fano varieties},
    publisher = {Société mathématique de France},
    book = {Nombre et répartition de points de hauteur bornée},
    year = 1998,
    pages = {91-258},
    series = {Astérique},
    volume = {251},
}

@article{davenport_geometry_numbers,
    author = {DAVENPORT, Harold},
    title = "{On a Principle of Lipschitz}",
    journal = {Journal of the London Mathematical Society},
    volume = {s1-26},
    number = {3},
    pages = {179-183},
    year = {1951},
    month = {07},
    issn = {0024-6107},
    doi = {10.1112/jlms/s1-26.3.179},
    url = {https://doi.org/10.1112/jlms/s1-26.3.179},
    eprint = {https://academic.oup.com/jlms/article-pdf/s1-26/3/179/2536641/s1-26-3-179.pdf},
}

@article{Malle2002,
  author  = {Malle, Gunter},
  title   = {On the distribution of Galois groups},
  journal = {Journal of Number Theory},
  volume  = {92},
  year    = {2002},
  pages   = {315--329}
}

@article{Malle2004,
  author  = {Malle, Gunter},
  title   = {On the distribution of Galois groups II},
  journal = {Experimental Mathematics},
  volume  = {13},
  year    = {2004},
  number  = {2},
  pages   = {129--135}
}

@article{bhargava_counting_cubic_field,
    title = {On the Davenport–Heilbronn theorems and second order terms},
    author = {Manjul BHARGAVA, Arul SHANKAR, Jacob TSIMERMAN},
    journal = {Inventiones mathematicae},
    year = {2013},
    volume={193},
    pages = {439-499}
}

@article{bhargava_mass_formula,
    author = {Bhargava, Manjul},
    title = {Mass Formulae for Extensions of Local Fields, and Conjectures on the Density of Number Field Discriminants},
    journal = {International Mathematics Research Notices},
    volume = {2007},
    pages = {rnm052},
    year = {2007},
    month = {01},
    doi = {10.1093/imrn/rnm052},
    url = {https://doi.org/10.1093/imrn/rnm052},
    eprint = {https://academic.oup.com/imrn/article-pdf/doi/10.1093/imrn/rnm052/19150310/rnm052.pdf},
}

@article{darda2024batyrevmanin,
  author  = {Darda, Ratko and Yasuda, Takehiko},
  title   = {The Batyrev–Manin conjecture for {DM} stacks},
  journal = {Journal of the European Mathematical Society},
  year    = {2024},
  doi     = {10.4171/JEMS/???}, 
  note    = {Publié en ligne (online first)},
  pages= {N/A},
}

@article{ellenberg2022heights,
  author  = {Ellenberg, Jordan S. and Satriano, Matthew and Zureick-Brown, David},
  title   = {Heights on stacks and a generalized Batyrev--Manin--Malle conjecture},
  journal = {Forum of Mathematics, Sigma},
  volume  = {11},
  year    = {2023},
  pages   = {e14},
  doi     = {10.1017/fms.2023.5},
}

@misc{loughran_santens_malle_conjecture,
  author       = {Daniel Loughran and Tim Santens},
  title        = {Malle's conjecture and {B}rauer groups of stacks},
  year         = {2024},
  eprint       = {2412.04196},
  archivePrefix= {arXiv},
  primaryClass = {math.NT},
  note         = {arXiv:2412.04196},
  url          = {https://arxiv.org/abs/2412.04196}
}

@misc{darda_yasuda_toric_stacks_batyrev,
  title        = {The Manin conjecture for toric stacks},
  author  = {Darda, Ratko and Yasuda, Takehiko},
  year         = {2023},
  eprint       = {2311.02012},
  archivePrefix= {arXiv},
  primaryClass = {math.NT},
  note         = {arXiv:2311.02012},
  url          = {https://arxiv.org/abs/2311.02012}
}

@misc{darda2025orbifoldpseudoeffectiveconestoric,
  author  = {Darda, Ratko and Yasuda, Takehiko},
  title        = {Orbifold pseudo-effective cones of toric stacks},
  year         = {2025},
  eprint       = {2508.20434},
  archivePrefix= {arXiv},
  primaryClass = {math.AG},
  note         = {arXiv:2508.20434},
  url          = {https://arxiv.org/abs/2508.20434}
}

@article{pieropan_hyperbola_method,
     author = {Marta Pieropan and Damaris Schindler},
     title = {Hyperbola method on toric varieties},
     journal = {Journal de l{\textquoteright}\'Ecole polytechnique {\textemdash} Math\'ematiques},
     pages = {107--157},
     publisher = {\'Ecole polytechnique},
     volume = {11},
     year = {2024},
     doi = {10.5802/jep.251},
     mrnumber = {4683391},
     zbl = {07811890},
     language = {en},
     url = {https://jep.centre-mersenne.org/articles/10.5802/jep.251/}
}

@misc{bongiorno2024multiheightanalysisrationalpoints,
      title={Multi-height analysis of rational points of toric varieties}, 
      author={Nicolas Bongiorno},
      year={2024},
      eprint={2412.04226},
      archivePrefix={arXiv},
      primaryClass={math.NT},
      url={https://arxiv.org/abs/2412.04226}, 
}

@misc{bongiorno_toric_stacks,
      title={Multi-height analysis of rational points of toric stacks}, 
      author={Nicolas Bongiorno},
      year={2025},
      eprint={2512.04226},
      archivePrefix={arXiv},
      primaryClass={math.NT},
      url={https://arxiv.org/abs/2412.04226}, 
}

@book{DeLoeraRambauSantosTriangulations,
  author    = {De Loera, Jes{\'u}s A. and Rambau, J{\"o}rg and Santos, Francisco},
  title     = {Triangulations},
  publisher = {Springer},
  year      = {2010},
  series    = {Algorithms and Computation in Mathematics},
  volume    = {25},
  address   = {Berlin},
  isbn      = {978-3-642-12970-8}
}

\end{document}